\newcommand{\hide}[1]{}
\newcommand{\ie}{\textit{i.e.}}
\newcommand{\eg}{\textit{e.g.}}
\newcommand{\confer}{\textit{cf.}}
\newcommand{\noemph}[1]{\emph{#1}}
\newcommand{\comp}{\circ}
\newcommand{\iso}{\cong}
\newcommand{\catequiv}{\simeq}
\newcommand{\id}{\mathrm{id}}
\newcommand{\tensor}{\otimes}
\newcommand{\juxt}{\oplus}
\newcommand{\FinDag}{\mathbf{Dag}}
\newcommand{\DagLetter}{D}
\newcommand{\DagPROP}{\prop\DagLetter}
\newcommand{\IdemObjPROP}{\prop V}
\newcommand{\EmptyPROP}{\prop P}
\newcommand{\LoopsPROP}[1]{\prop\LoopsLetter_{#1}}
\newcommand{\LoopPROP}{\LoopsPROP1}
\newcommand{\TreeLetter}{F}
\newcommand{\TreePROP}{\prop\TreeLetter}
\newcommand{\BiAlgLetter}{B}
\newcommand{\BiAlgPROP}{\prop\BiAlgLetter}
\newcommand{\BiAlgTheory}{\smTheory\BiAlgLetter}
\newcommand{\HopfAlgLetter}{H}
\newcommand{\HopfAlgPROP}{\prop\HopfAlgLetter}
\newcommand{\HopfAlgTheory}{\smTheory\HopfAlgLetter}
\newcommand{\DCBiAlgLetter}{R}
\newcommand{\DCBiAlgPROP}{\prop\DCBiAlgLetter}
\newcommand{\super}{\vee}
\newcommand{\setof}[1]{\{ #1 \}}
\newcommand{\bigsetof}[1]{\big\{ #1 \big\}}
\newcommand{\suchthat}{\mid}
\newcommand{\PROP}{PROP}
\newcommand{\PROPs}{\PROP{s}}
\newcommand{\Nat}{\mathbb{N}}
\newcommand{\Int}{\mathbb{Z}}
\newcommand{\FreeProp}[1]{\prop{P}[#1]}
\newcommand{\prop}[1]{\mathbf{#1}}
\newcommand{\FunMod}[2]{\mathsf{Mod}_{#1}(#2)}
\newcommand{\Alg}[2]{\mathsf{Alg}_{#1}(#2)}
\newcommand{\cat}[1]{\mathscr{#1}}
\newcommand{\g}{o}
\newcommand{\Sig}{\Sigma}
\newcommand{\Pres}{\mathcal T}
\newcommand{\Exp}{\mathrm E}
\newcommand{\Op}{O}
\newcommand{\op}{o}
\newcommand{\DCBiAlgLoopLetter}{D}
\newcommand{\DCBiAlgLoopTheory}{\smTheory\DCBiAlgLoopLetter}
\newcommand{\DagTheory}{\DCBiAlgLoopTheory}
\newcommand{\DCBiAlgTheory}{\smTheory\DCBiAlgLetter}
\newcommand{\LoopTheory}{\LoopsTheory 1}
\newcommand{\LoopsLetter}{N}
\newcommand{\LoopsTheory}[1]{\smTheory\LoopsLetter_{#1}}
\newcommand{\smTheory}[1]{\mathsf #1}
\newcommand{\symm}{\gamma}
\newcommand{\sem}[1]{\llbracket #1 \rrbracket}
\newcommand{\proj}{p}
\newcommand{\inj}{\imath}
\newcommand{\semD}[1]{\mathcal D\sem{#1}}
\newcommand{\psemD}[2]{\mathcal D_{#2}\sem{#1}}
\newcommand{\semR}[1]{\mathcal R\sem{#1}}
\newcommand{\semS}[1]{\mathcal N_1\sem{#1}}
\newcommand{\relu}{\cup}
\newcommand{\ord}[1]{\underline{#1}}
\newcommand{\ol}{\overline}
\newcommand{\after}{/}
\newcommand{\concat}{\comp}
\spnewtheorem*{example*}{Example}{\itshape}{\rmfamily}
\newcommand{\mysection}[1]{\section{#1}\setcounter{paragraph}{0}}
\newcommand{\myparagraph}[1]{\paragraph{\bf #1.}}
\begin{document}

\frontmatter

\title{The Algebra of Directed Acyclic Graphs}

\titlerunning{The Algebra of Directed Acyclic Graphs}

\toctitle{The Algebra of Directed Acyclic Graphs}


\author{Marcelo Fiore \and Marco Devesas Campos}

\authorrunning{Marcelo Fiore and Marco Devesas Campos}

\institute{Computer Laboratory\\University of Cambridge
}

\maketitle              

\begin{abstract}
We give an algebraic presentation of directed acyclic graph structure,
introducing a symmetric monoidal equational theory whose free {\PROP} we
characterise as that of finite abstract dags with input/output interfaces.
Our development provides an initial-algebra semantics for dag structure.
\keywords{dag, \PROP, symmetric monoidal equational theory, bialgebra,
  Hopf algebra, topological sorting, initial-algebra and categorical
  semantics}
\end{abstract}

\mysection{Introduction}

This work originated in a question of Robin Milner in connection to
explorations he was pursuing on possible extensions to his theory of
bigraphs~\cite{BigraphsBook}.
The particular direction that concerns us here is the generalisation of
the spatial dimension of bigraphs from a tree hierarchy to a directed
acyclic graph (dag) structure.

In~\cite{milner2005axioms}, Milner provided axioms for bigraphical
structure, axiomatising tree-branching structure by means of the
equational theory of commutative monoids.  As for the axiomatisation of
dag structure, he foresaw that it would also involve the dual theory of
commutative comonoids and, in conversation with the first author, raised
the question on how these two structures should interact.  In considering
the problem, it soon became clear that the axioms in question were those
of commutative bialgebras (where the monoid structure is a comonoid
homomorphism and, equivalently, the comonoid structure is a monoid
homomorphism) 
that are degenerate in that the composition of the comultiplication
followed by the multiplication collapses to the identity. 
%
This gives the axiomatics of wiring for dag structure.

The natural setting for presenting our work is the categorical language of
{\PROPs}; specifically relying on the concept of free {\PROP}, which
roughly corresponds to the symmetric strict monoidal category freely
generated by a symmetric monoidal equational theory.  Indeed, our main
result characterises the free {\PROP} on the theory~$\DagTheory$ of
degenerate commutative bialgebras with a node (endomap) as that of
finite abstract dags with input/output interfaces, see
Section~\ref{TheAlgebraOfIdags}.  Let us give an idea of why this is so.

It is important to note that the theory~$\DagTheory$ is the sum of two
sub-theories: the theory~$\DCBiAlgTheory$ of degenerate commutative
bialgebras and the theory~$\LoopTheory$ of a node (endomap).  Each of
these theories captures a different aspect of dag structure.  The free
{\PROP} on $\DCBiAlgTheory$ provides relational edge structure; while the
free {\PROP} on $\LoopTheory$ introduces node structure.  Thus, the free
{\PROP} on their sum, which is essentially obtained by interleaving both
structures, results in dag structure.  A main aim of the paper is to give
a simple technical development that formalises these intuitions.

This work falls within a central theme of Samson Abramsky's research:
the mathematical study of syntactic structure, an example of which
in the context of PROs is his characterisation of Temperley-Lieb
structure~\cite{Abramsky}.


\mysection{Directed acyclic graphs}

\myparagraph{Dags}

A \noemph{directed acyclic graph~(dag)} is a graph with directed edges in
which there are no cycles.  Formally, a directed graph is a pair~${(N, R
  \subseteq N \times N)}$ consisting of a set of nodes $N$ and a binary
relation~$R$ on it that specifies a directed edge from a node $n$ to
another one $m$ whenever $(n,m)\in R$.  The acyclicity condition of a
dag~$(N,R)$ is ensured by requiring that the transitive closure~$R^+$ of
the relation~$R$ is irreflexive; \ie~$(n,n) \notin R^+$ for all $n\in N$.

\myparagraph{Idags}

We will deal here with a slight generalisation of the notion of dag.  An
\noemph{interfaced dag (idag)} is a tuple of sets~$I,O,N$ and a 
binary
relation ${R\subseteq(I+N)\times(O+N)}$, for $+$ the 
sum 
of sets, subject to the acyclicity condition $(n,n) \notin (\proj \circ R
\circ \inj)^+$ for all $n\in N$, where the relations $i\subseteq
N\times(I+N)$ and $p\subseteq(O+N)\times N$ respectively denote the
injection of $N$ into $I+N$ and the projection of $O+N$ onto $N$.

Informally, idags are dags extended with interfaces.  An idag~$(I,O,N,R)$,
also referred to as an $(I,O)$-dag, is said to have input interface~$I$
and output interface~$O$; $N$ is its set of internal nodes.
Fig.~\ref{Adag} depicts two examples with input and output sets of
ordinals, where for $n\in\Nat$ we adopt the notation $\ord n$ for the
ordinal $\setof{0,\ldots,n-1}$.
\begin{figure}[h]
\centering
\subfigure{
\begin{tikzpicture}[>=stealth]
\draw (-3cm,1.5cm) node[fill,scale=0.4,shape=circle,label=left:$0$] (in0) {};
\draw (-3cm,0.5cm) node[fill,scale=0.4,shape=circle,label=left:$1$] (in1) {};

\draw (-1.5cm,0.5cm) node[draw,shape=circle,label=above:$k$] (k) {};
\draw (-1.5cm,0cm) node[draw,shape=circle,label=below:$l$] (l) {};
    
\draw (0cm,2cm) node[fill,scale=0.4,shape=circle,label=right:$0$] (out0) {};
\draw (0cm,1cm) node[fill,scale=0.4,shape=circle,label=right:$1$] (out1) {};
\draw (0cm,0cm) node[fill,scale=0.4,shape=circle,label=right:$2$] (out2) {};

\draw [->] (in0) -- (out1);
\draw [->] (in0) -- (k);
\draw [->] (in1) -- (k);
\draw [->] (k) -- (out1);
\draw [->] (k) -- (out2);
\draw [->] (l) -- (out2);
\end{tikzpicture}
}
\qquad\qquad
\subfigure{
\begin{tikzpicture}[>=stealth, baseline=-0.595cm]

\draw (1.5cm,1cm) node[draw,shape=circle,label=below:$a$] (a) {};
\draw (3cm,1.5cm) node[draw,shape=circle,label=below:$b$] (b) {};
\draw (1.5cm, 0cm) node[draw,shape=circle,label=below:$c$] (c) {};
\draw (3cm,0.5cm) node[draw,shape=circle,label=below:$d$] (d) {};

\draw (0cm,2cm) node[fill,scale=0.4,shape=circle,label=left:$0$] (in0) {};
\draw (0cm,1cm) node[fill,scale=0.4,shape=circle,label=left:$1$] (in1) {};
\draw (0cm,0cm) node[fill,scale=0.4,shape=circle,label=left:$2$] (in2) {};

\draw (4.5cm,1cm) node[fill,scale=0.4,shape=circle,label=right:$0$] (out1) {};

\draw [->] (in0) -- (a);
\draw [->] (in0) -- (b);
\draw [->] (in1) -- (a);
\draw [->] (in2) -- (a);
\draw [->] (in2) -- (c);
\draw [->] (a)   -- (b);
\draw [->] (a)   -- (d);
\draw [->] (b)   -- (out1);
\draw [->] (c)   -- (d);
\draw [->] (b)   -- (out1);
\draw [->] (d)   -- (out1);
\end{tikzpicture}
}
%
%
%
%
\caption{A $(\ord 2,\ord 3)$-dag and a $(\ord 3,\ord 1)$-dag.}
\label{Adag}
\end{figure}

The notion of dag is recovered as that of idag with empty sets of input
and output nodes.  Idags also generalise binary relations, as these are in
bijective correspondence with idags without internal nodes.

\myparagraph{Operations on idags}

The extension of dags with interfaces allows for two basic operations on
them.

The \emph{concatenation} operation $D' \concat D$  of an \mbox{$(I,M)$-dag}
$D=(N,R)$ and an \mbox{$(M,O)$-dag} $D'=(N',R')$ is the \mbox{$(I,O)$-dag}
that retains the hierarchy information of both idags except that edges in
$R$ from input and internal nodes in $D$ to intermediate nodes in $M$
become redirected to target internal and output nodes in $D'$ as
specified by $R'$.
Formally, $D'\concat D =(N+N',R'')$ where $R''$ is 
the composite 
\[
  I+N+N'
  \xymatrix@C25pt{\ar[r]^-{R+\id}&}
  M+N+N'\iso M+N'+N
  \xymatrix@C30pt{\ar[r]^-{R'+\id}&}
  O+N'+N
  \iso
  O+N+N'
  \enspace.
\]

\begin{figure}[t]
\centering
\begin{tikzpicture}[>=stealth, yscale=1.5]
\draw (-1.5cm,1.5cm) node[fill,scale=0.4,shape=circle,label=left:$0$] (in0) {};
\draw (-1.5cm,0cm) node[fill,scale=0.4,shape=circle,label=left:$1$] (in1) {};

\draw (-0.5cm,0.75cm) node[draw,shape=circle,label=left:$k$] (k) {};
\draw (.5cm,0.75cm) node[draw,shape=circle,label=left:$l$] (l) {};
    
\draw (1.5cm,1.5cm) node[draw,shape=circle,label=below:$a$] (a) {};
\draw (3cm,1.5cm) node[draw,shape=circle,label=below:$b$] (b) {};
\draw (1.5cm, 0cm) node[draw,shape=circle,label=below:$c$] (c) {};
\draw (3cm,0cm) node[draw,shape=circle,label=below:$d$] (d) {};

\draw (4.5cm,0.75cm) node[fill,scale=0.4,shape=circle,label=right:$0$] (out1) {};

\draw [->] (in0) -- (a);
\draw [->] (in0) -- (k);
\draw [->] (in1) -- (k);
\draw [->] (k) -- (a);
\draw [->] (k) -- (c);
\draw [->] (l) -- (a);
\draw [->] (l) -- (c);
\draw [->] (a)   -- (b);
\draw [->] (a)   -- (d);
\draw [->] (b)   -- (out1);
\draw [->] (c)   -- (d);
\draw [->] (b)   -- (out1);
\draw [->] (d)   -- (out1);
\end{tikzpicture}
\caption{The concatenation of the idags of Fig.~\ref{Adag}.}
\end{figure}
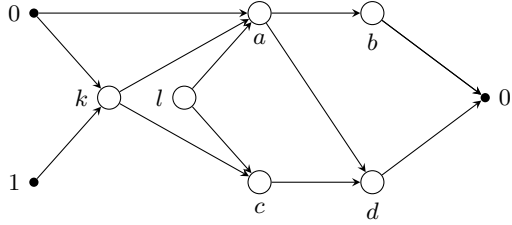

The \emph{juxtaposition} operation~$D\juxt D'$ of an \mbox{$(I,O)$-dag}
$D=(N,R)$ and an \mbox{$(I',O')$-dag} $D'=(N',R')$ is the
\mbox{$(I+I',O+O')$-dag} $D\juxt D'=(N+N', R'')$ where $R''$ is 
the composite 
%
%
\[
  I+I'+N+N'\iso I+N+I'+N'
  \xymatrix{\ar[r]^-{R+R'}&}
  O+N+O'+N'\iso O+O'+N+N'
  \enspace.
\]
Thus, juxtaposition puts two idags side by side, without modifying their
hierarchies. 


\hide{
The \emph{superposition} operation $D\super D'$ of two \mbox{$(I,O)$-dags}
$D=(N,R)$ and $D'=(N',R')$ is the \mbox{$(I,O)$-dag} $D\super D'= M_O
\comp (D\juxt D') \comp S_I$, where $S_I$ is the \mbox{$(I,I+I)$-dag}
$\big(\emptyset,\Delta_I+\id_\emptyset)$ and $M_O$ the
\mbox{$(O+O,O)$-dag} $(\emptyset,\nabla_O+\id_\emptyset)$.
}

\myparagraph{The category of finite abstract idags}

As we are to look at idags abstractly, we need a notion that identifies
those that are essentially the same.  Accordingly, we set two
\mbox{$(I,O)$-dags} $(N,R)$ and $(N',R')$ to be isomorphic whenever there
exists a bijection $\sigma:N \iso N'$ such that $(\id+\sigma)\comp R =
R'\comp (\id+\sigma)$.  

\noemph{Abstract \mbox{$(I,O)$-dags}} are then defined to be equivalence
classes of isomorphic \mbox{$(I,O)$-dags}.  The operations of
concatenation and juxtaposition respect isomorphism 
%
%
and one can use them to endow abstract idags with the structure of a
symmetric monoidal category.  We will restrict attention to the finite
case:  The category~$\FinDag$ has objects given by finite sets and
homs~$\FinDag(I,O)$ given by abstract \mbox{$(I,O)$-dags} with a finite
set of internal nodes.  These are equipped with composition operation
given by concatenation and identities given by identity relations.
Furthermore, the juxtaposition operation provides a symmetric tensor
product with unit the empty set.

The aim of the paper is to give an algebraic presentation characterising
$\FinDag$.  The appropriate setting for establishing our result is that of
{\PROP}s, to which we now turn.

\mysection{Product and permutation categories}

\myparagraph{\PROPs}

A \noemph{PROduct and Permutation category~(\PROP)},
see~\cite{mac1963categorical}, is a symmetric strict monoidal category
with objects the natural numbers and tensor product given by addition.
This definition is often relaxed in practice, allowing symmetric strict
monoidal categories with underlying commutative monoid structure on
objects isomorphic to the commutative monoid of natural numbers.  A
typical example is the additive monoid of finite
ordinals~$\big(\setof{\ord n\suchthat n\in\Nat }, \ord 0,\oplus\big)$, for
which $\ord n\oplus \ord m=\ord{n+m}$.

The main example of {\PROP} to be studied in the paper follows.
\begin{example*}
The category~$\FinDag$ is equivalent to the {\PROP}~$\DagPROP$ consisting of
its full subcategory determined by the finite ordinals.
\end{example*}

{\PROPs} describe algebraic structure, with the category $\FunMod{\prop
  P}{\cat C}$ of \emph{functorial models} of a {\PROP}~$\prop P$ in a
symmetric monoidal category~$\cat C$ given by symmetric monoidal
functors~$\prop P\to \cat C$ and symmetric monoidal natural
transformations between them.

\myparagraph{Free \PROPs} 

As remarked by Mac Lane in~\cite{mac1963categorical}, ``[a] useful
construction yields the free {\PROP} [\ldots] with given generators and
relations''; the usefulness residing in it being ``adapted to the study of
universal algebra''.  We briefly recall the construction and its universal
characterisation.

A \emph{signature} consists of a set of operators~$\Op$ together with an
assignment ${\Op\to\Nat\times\Nat}$ of arity/coarity pairs to operators.  In
this context, it is usual to use the notation ${\op:n\to m}$ to indicate that
the operator~$\op$ is assigned the arity/coarity pair~$(n,m)$.  
For a signature~$\Sig$, we let $\Exp(\Sig)$ consist of the expressions with
arity/coarity pairs generated by the language of symmetric strict monoidal
categories with underlying commutative monoid the additive natural numbers
together with the operators in $\Sig$ (\confer~\cite{Jay}).
A \emph{symmetric monoidal presentation} on a signature~$\Sig$ is then a set
of pairs of expressions in $\Exp(\Sig)$ with the same arity/coarity pair.
A \emph{symmetric monoidal equational theory} consists of a signature together
with a symmetric monoidal presentation on it.
An \emph{algebra} for a symmetric monoidal equational
theory~$(\Sig,\Pres)$ in a symmetric monoidal category is an object~$A$
equipped with morphisms $A^{\tensor n}\to A^{\tensor m}$ for every
operator of arity/coarity pair~$(n,m)$ in $\Sig$ such that the
interpretation of every equation in $\Pres$ is satisfied.  We write
$\Alg{(\Sig,\Pres)}{\cat C}$ for the category of
\mbox{$(\Sig,\Pres)$-algebras} and homomorphisms in $\cat C$.

The free {\PROP}~$\FreeProp{\Sig,\Pres}$ on a symmetric monoidal
theory~$(\Sig,\Pres)$ has homs $\FreeProp{\Sig,\Pres}(n,m)$ given by the
quotient of the set of expressions $\Exp(\Sig)$, with arity~$n$ and
coarity~$m$, under the laws of symmetric strict monoidal categories and
the presentation~$\Pres$.  It is universally characterised by a natural
equivalence 
\[
  \FunMod{\FreeProp{\Sig,\Pres}}{\cat C}
  \catequiv
  \Alg{(\Sig,\Pres)}{\cat C}
\]
for $\cat C$ ranging over symmetric monoidal categories.

\mysection{Examples of free {\PROPs}}

We give examples of symmetric monoidal equational theories together with
abstract characterisations of their induced free {\PROPs}.

\myparagraph{Empty theory}

The free {\PROP}~$\EmptyPROP$ on the \noemph{empty symmetric monoidal theory}
(with no operators and no equations) is the initial symmetric strict monoidal
category, \ie~the groupoid of finite ordinals and bijections.

\myparagraph{Nodes}

For a set~$L$, the free {\PROP}~$\LoopsPROP L$ on the symmetric monoidal
theory of \noemph{nodes}~$\LoopsTheory L = \big(\, \setof{ \lambda : 1\to
  1\,}_{\lambda \in L} , \emptyset \,\big)$ is the free symmetric strict
monoidal category on the free monoid~$(L^\star,\varepsilon,\cdot)$.  
Explicitly, $\LoopsPROP L$ has finite ordinals as objects and homs~$\LoopsPROP
L(\ord n,\ord m)= \EmptyPROP(\ord n,\ord m)\times (L^\star)^n$ with
identities~$(\id_n,\varepsilon)$
and
composition~${(\tau,w)\comp(\sigma,v) =
\big(\tau\comp\sigma,(w_{\sigma(i)}\cdot v_i)_{0\leq i<n}\big)}$.

\myparagraph{Idempotent objects}

The symmetric monoidal theory of an \noemph{idempotent object} has signature
with operators $\Delta:1\to 2$ and $\nabla:2\to 1$ subject to the presentation
\begin{gather*}
\nabla\comp\Delta = \id_1
\ : 1\to 1
\enspace,
\quad 
\Delta\comp\nabla = \id_2
\ : 2\to 2
\enspace.
\end{gather*}
The free {\PROP}~$\IdemObjPROP$ on it is a groupoid, with
hom~$\IdemObjPROP(1,1)$ given by Thompson's group~$V$,
see~\cite{FioreLeinster}.

\myparagraph{Commutative monoids and commutative comonoids}

The symmetric monoidal theory of \noemph{commutative monoids} has signature with
operators $\eta: 0 \rightarrow 1$ and $\nabla: 2 \rightarrow 1$ subject to the
presentation
\begin{gather*}
\nabla \comp (\eta \tensor \id_1) = \id_1 = \nabla \comp (\id_1 \tensor \eta)
\ : 1 \to 1
\enspace,\\
\nabla \comp (\nabla \tensor \id_1) = \nabla \comp (\id_1 \tensor \nabla)
\ : 3 \to 1
\enspace,\\
\nabla \comp \symm_{1,1} = \nabla
\ : 2 \to 2
\end{gather*}
where $\symm$ denotes the symmetry.
The free {\PROP} on it is the category of finite ordinals and
functions. 

The dual symmetric monoidal theory is that of \noemph{commutative comonoids}.
It has signature with operators $\epsilon: 1 \rightarrow 0$ and $\Delta: 1
\rightarrow 2$ subject to the presentation
\begin{gather*}
(\epsilon \tensor \id_1) \comp \Delta = \id_1 = (\id_1 \tensor \epsilon)
\comp \Delta
\ : 1\to 1
\enspace,
\\ 
(\Delta \tensor \id_1) \comp \Delta = (\id_1 \tensor \Delta)
\comp \Delta
\ : 1\to 3
\enspace,
\\
\symm_{1,1} \comp \Delta = \Delta
\ : 2\to 2
\enspace.
\end{gather*}
The free {\PROP} on it is of course the opposite of the category of finite
ordinals and functions.  

\myparagraph{Commutative bialgebras}
\label{CommutativeBialgebras}

The symmetric monoidal theory~$\BiAlgTheory$ of \noemph{commutative bialgebras}
has signature with operators $\eta: 0 \rightarrow 1$, $\nabla: 2 \rightarrow
1$, $\epsilon: 1 \rightarrow 0$, and $\Delta: 1 \rightarrow 2$ subject to the
presentation consisting of that of commutative monoids, commutative comonoids,
and the following
\begin{gather*}
\epsilon \comp \eta = \id_0
\ : 0\to 0
\enspace,
\\
\epsilon \comp \nabla = \epsilon \tensor \epsilon
\ : 2\to 0
\enspace,\quad
\Delta \comp \eta = \eta \tensor \eta
\ : 0\to 2
\enspace,\quad
\\
\Delta \comp \nabla 
= (\nabla \tensor \nabla) 
  \comp 
  (\id_1 \tensor \symm_{1,1} \tensor \id_1) 
  \comp
  (\Delta \tensor \Delta)
\ : 2 \to 2
\enspace.
\end{gather*}
The symmetric monoidal theory of \noemph{degenerate commutative bialgebras}
extends the above with the equation
\begin{gather*}
\nabla \comp \Delta = \id_1
\ : 1\to 1
\enspace.
\end{gather*}

The free {\PROP}~$\BiAlgPROP$ on the symmetric monoidal theory of commutative
bialgebras has homs~$\BiAlgPROP(n,m)=\Nat^{n\times m}$ under matrix
composition.  Accordingly, the free {\PROP}~$\DCBiAlgPROP$ on the symmetric
monoidal theory of degenerate commutative bialgebras is the category of finite
ordinals and relations.
See~\eg~\cite[\S10]{mac1963categorical},~\cite{pirashvili2002prop},
and~\cite{lack2004composing}.

\myparagraph{Commutative Hopf algebras}
\label{CommutativeHopfAlgebras}

The symmetric monoidal theory of \noemph{commutative Hopf algebras} extends that
of commutative bialgebras with an \noemph{antipode} operator~$s: 1\to 1$ subject
to the laws: 
\[\begin{array}{c}
\begin{array}{cc}
s \comp \eta = \eta 
\ : 0\to1
\enspace,
&
\nabla \comp (s \oplus s) = s \circ \nabla
\ : 2\to1
\enspace,
\\[1mm]
\epsilon \comp s = \epsilon
\ : 1\to0
\enspace,
&
(s \oplus s) \comp \Delta = \Delta \comp s 
\ : 1\to2
\enspace,
\end{array}
\\[4mm]
\nabla \comp (s \oplus \id_1) \comp \Delta 
= \eta \comp \epsilon 
= \nabla \comp (\id_1 \oplus s) \comp \Delta
\ : 1\to1
\enspace.
\end{array}\]
Its free {\PROP}~$\HopfAlgPROP$ has homs~$\HopfAlgPROP(n,m)=\Int^{n\times m}$
under matrix composition.\footnote{We are grateful to Ross Duncan and
  Aleks Kissinger for bringing this example to our attention.}

\myparagraph{Commutative monoids with a node}

The symmetric monoidal theory of \noemph{commutative monoids with a node} is
the sum of the theory of commutative monoids and the theory of a single
node.  Its free {\PROP}~$\TreePROP$ has homs consisting of interfaced
forests.  Precisely, $\TreePROP$ is the sub-{\PROP} of $\DagPROP$
determined by the interfaced dags~$(N,R)$ with $R$ a total function,
see~\cite{milner2005axioms}.  

\mysection{The algebra of idags}
\label{TheAlgebraOfIdags}

\myparagraph{Algebraic structure} 

The generator $\ord 1$ of the {\PROP}~$\DagPROP$ carries two important
algebraic structures: 
\newpage
\begin{enumerate}
\item the degenerate commutative bialgebra
\begin{center}
\begin{minipage}[b]{0.22\linewidth}
\centering
\begin{tikzpicture}[>=stealth]
\draw[rounded corners=0.3cm] (-0.5cm,-0.5cm) rectangle (1.5cm,1.5cm);

\draw (1cm,0.5cm) node[fill,scale=0.4,shape=circle,label=right:0] (out0) {};
\end{tikzpicture}
\\
$\eta: \ord 0 \rightarrow \ord 1$
\end{minipage}
\begin{minipage}[b]{0.22\linewidth}
\centering
\begin{tikzpicture}[>=stealth]
\draw[rounded corners=0.3cm] (-0.5cm,-0.5cm) rectangle (1.5cm,1.5cm);

\draw (0cm,1cm) node[fill,scale=0.4,shape=circle,label=left:0] (in0) {};
\draw (0cm,0cm) node[fill,scale=0.4,shape=circle,label=left:1] (in1) {};
\draw (1cm,0.5cm) node[fill,scale=0.4,shape=circle,label=right:0] (out0) {};

\draw [->] (in0) -- (out0);
\draw [->] (in1) -- (out0);
\end{tikzpicture}
\\
$\nabla:\ord 2 \rightarrow \ord 1$
\end{minipage}
\begin{minipage}[b]{0.22\linewidth}
\centering
\begin{tikzpicture}[>=stealth]
\draw[rounded corners=0.3cm] (-0.5cm,-0.5cm) rectangle (1.5cm,1.5cm);

\draw (0cm,0.5cm) node[fill,scale=0.4,shape=circle,label=left:0] (in0) {};
\end{tikzpicture}
\\
$\epsilon:\ord 1 \rightarrow \ord 0$
\end{minipage}
\begin{minipage}[b]{0.22\linewidth}
\centering
\begin{tikzpicture}[>=stealth]
\draw[rounded corners=0.3cm] (-0.5cm,-0.5cm) rectangle (1.5cm,1.5cm);

\draw (0cm,0.5cm) node[fill,scale=0.4,shape=circle,label=left:0] (in0) {};
\draw (1cm,1cm) node[fill,scale=0.4,shape=circle,label=right:0] (out0) {};
\draw (1cm,0cm) node[fill,scale=0.4,shape=circle,label=right:1] (out1) {};

\draw [->] (in0) -- (out1);
\draw [->] (in0) -- (out0);
\end{tikzpicture}
\\
$\Delta:\ord 1 \rightarrow \ord 2$
\end{minipage}
\\
\vspace{7pt}
\end{center}
and

\item the node
\begin{center}\begin{minipage}[b]{0.2\linewidth}
\centering
\begin{tikzpicture}[>=stealth]
\draw[rounded corners=0.3cm] (-0.5cm,-0.5cm) rectangle (1.5cm,1.5cm);

\draw (0cm,0.5cm) node[fill,scale=0.4,shape=circle,label=left:0] (in0) {};
\draw (0.5cm,0.5cm) node[draw, scale=0.8,shape=circle 
] (g) {};
\draw (1cm,0.5cm) node[fill,scale=0.4,shape=circle,label=right:0] (out0) {};

\draw [->] (in0) -- (g);
\draw [->] (g) -- (out0);
\end{tikzpicture}
\\
$\lambda:\ord 1 \rightarrow \ord 1$
\end{minipage}\end{center}
\end{enumerate}
These respectively induce universal injections of {\PROPs} as follows 
\begin{equation}\label{Cospan}
  \begin{array}{c}\xymatrix@C15pt@R15pt{
 \DCBiAlgPROP \ar[dr] & & \ar[dl]\LoopPROP
   \\
   & \DagPROP & 
  }\end{array}
\end{equation}
The main result of the paper is that together the {\PROPs}~$\DCBiAlgPROP$ and
$\LoopPROP$ characterise the {\PROP}~$\DagPROP$.
\begin{theorem}
  The {\PROP}~$\DagPROP$ is free on the symmetric monoidal theory
  $\DCBiAlgLoopTheory$ of \emph{degenerate commutative bialgebras with a
    node} (\ie~the sum of the theory~$\DCBiAlgTheory$ of degenerate
  commutative bialgebras and the theory~$\LoopTheory$ of a node).
\end{theorem}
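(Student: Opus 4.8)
The plan is to use the universal property of free {\PROP}s recorded in the excerpt: it suffices to produce an isomorphism of {\PROP}s $\FreeProp{\DagTheory}\iso\DagPROP$, equivalently a natural equivalence $\FunMod{\DagPROP}{\cat C}\catequiv\Alg{\DagTheory}{\cat C}$. First I would record the algebra structure already displayed above: the idags depicting $\eta,\nabla,\epsilon,\Delta$ make $\ord 1$ a degenerate commutative bialgebra in $\DagPROP$, and the idag depicting $\lambda$ equips it with a node. Checking that the defining equations of $\DagTheory$ hold in $\DagPROP$ is a routine verification, most of it inherited from the already-established identification of $\DCBiAlgPROP$ with finite ordinals and relations, the node being the only genuinely new piece. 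Since a $\DagTheory$-algebra on an object is, by definition of the sum of theories, exactly a $\DCBiAlgTheory$-algebra together with a $\LoopTheory$-algebra on that object, this data is precisely a cocone for the cospan~\eqref{Cospan} and so induces a canonical comparison functor $F\colon\FreeProp{\DagTheory}\to\DagPROP$. As both {\PROP}s have the natural numbers as objects and $F$ is the identity on them, it then remains only to show that $F$ is a bijection on each hom-set.

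Surjectivity of $F$ on homs is the assertion that every idag is denotable, and this is where topological sorting enters, furnishing the ``interleaving'' intuition of the introduction. Given an $(\ord n,\ord m)$-dag $(N,R)$ with $|N|=k$, a topological enumeration $v_1,\dots,v_k$ of its nodes presents $R$ in upper-triangular form. Reshaping $R$ as a relation $\widehat R\in\DCBiAlgPROP(\ord n\juxt\ord k,\,\ord m\juxt\ord k)$ between inputs-with-node-outputs and outputs-with-node-inputs, one recovers the idag by feeding the $k$ node wires through $\lambda^{\tensor k}$; acyclicity guarantees that this feedback is resolved, layer by layer along the topological order, into an honest composite of wirings (morphisms of $\DCBiAlgPROP$) interleaved with single-node insertions $\lambda\juxt\id$. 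This produces an explicit expression mapped by $F$ to the given idag.

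Injectivity of $F$ — equivalently, completeness of the theory $\DagTheory$ with respect to the idag model — is the crux and the step I expect to be the main obstacle. The goal is a normal-form theorem: every expression of $\FreeProp{\DagTheory}$ is provably equal, using only the equations of $\DagTheory$, to one in the layered form above, determined by a relation together with a topologically-ordered list of nodes. I would establish this by induction on expressions, using the monoid, comonoid, and bialgebra laws to normalise all wiring into relations (invoking the known presentation of $\DCBiAlgPROP$), and the naturality of the symmetries to slide node-insertions $\lambda$ past wirings and to arrange independent nodes into topological order. The delicate point is uniqueness of this normal form up to idag isomorphism: distinct topological enumerations of the same dag yield distinct layerings that must be proved equal, which reduces to transposing adjacent independent nodes and is again a symmetry/naturality argument. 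Making this watertight — so that the only residual freedom is exactly the renaming of internal nodes by which abstract idags are already quotiented — is where the real work lies. Combining this with surjectivity yields the sought bijection between normal forms and abstract idags, and hence that $F$ is faithful, completing the proof.
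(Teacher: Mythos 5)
Your overall strategy --- build the comparison functor $F\colon\FreeProp{\DagTheory}\to\DagPROP$ from the bialgebra-with-node structure on $\ord 1$ and then prove it full and faithful --- is a legitimate route in principle, and your fullness argument via topological sorting is exactly the canonical decomposition~(\ref{DagDecomposition}) that the paper also uses. But the proposal has a genuine gap precisely where you admit ``the real work lies'': faithfulness. You would need that every expression of $\Exp(\Sig)$ is \emph{provably} equal, in the theory $\DagTheory$, to a layered normal form, and that the normal forms attached to two different topological sortings of the same dag (and to two isomorphic representatives) are provably equal to one another. Neither claim is established. The first in particular requires an induction over arbitrary composites and tensors of generators and symmetries, showing that wiring layers can be merged and that occurrences of $\lambda$ can be commuted into position using only the axioms of $\DagTheory$; this syntactic rewriting argument is the entire difficulty of the theorem and cannot be dispatched as ``routine normalisation plus naturality of symmetry''. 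As written, the proposal identifies the obstacle but does not overcome it.

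The paper takes the opposite --- and ultimately easier --- direction: rather than mapping syntax onto $\DagPROP$ and proving completeness, it shows directly that $\DagPROP$ has the required universal property by exhibiting the cospan~(\ref{Cospan}) as a pushout (Lemma~\ref{PushoutLemma}). Concretely, for an arbitrary $\DagTheory$-algebra $A$ in any symmetric monoidal category it defines an interpretation $\psemD D\sigma_A$ of each dag via the same layered decomposition, and then proves (i) invariance under isomorphism and under change of topological sorting (Lemma~\ref{lemma:transposition_props} and Proposition~\ref{TopSortInvariance}), and (ii) compositionality for concatenation and juxtaposition. All of these are verifications about concrete relations and dags --- the identities of Lemma~\ref{lemma:transposition_props} hold ``by construction'' in $\DCBiAlgPROP$, whose characterisation as finite ordinals and relations is already known --- so no reasoning about derivability in the equational theory is ever needed; uniqueness of the mediating functor falls out of the decomposition~(\ref{DagDecomposition}), which shows that $\DagPROP$ is generated by the images of $\DCBiAlgPROP$ and $\LoopPROP$. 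If you want to repair your faithfulness step, the cleanest fix is exactly this dualisation: your observation about transposing adjacent independent nodes becomes a statement about interpretations rather than about provable equality, and the normal-form theorem is never required.
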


\hide{
$\DCBiAlgLoopTheory$ contains no equations involving the node.  As a
consequence the free {\PROP} is not additive; in particular, it does not
respect distributivity, as~\eg~$\nabla \comp (\g \tensor \g) \comp \Delta
= \g \comp \nabla \comp \Delta$ is not provable.  Multiplication and
comultiplication are no longer natural transformations. 
}

The theorem is proved by establishing the universal property of the free
{\PROP} by means of the following lemma, whose proof occupies the rest of
the section.
\begin{lemma}\label{PushoutLemma}
  The cospan~(\ref{Cospan}) is a pushout of symmetric monoidal categories for
  the following span of universal {\PROP} injections 
  \[\xymatrix@C15pt@R15pt{
      & \ar[dl] \EmptyPROP \ar[dr] & 
      \\
      \DCBiAlgPROP & & \LoopPROP
    }\]
\end{lemma}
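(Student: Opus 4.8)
The plan is to establish the pushout by verifying its universal property directly against an arbitrary symmetric monoidal category~$\cat C$. Since $\EmptyPROP$ is the initial symmetric strict monoidal category, this pushout is morally the coproduct of $\DCBiAlgPROP$ and $\LoopPROP$, mirroring the fact that $\DagTheory$ is the sum $\DCBiAlgTheory+\LoopTheory$; but I would argue concretely in terms of idags. First I would check that the cospan~(\ref{Cospan}) is a cocone over the span: a permutation~$\sigma$ is sent by $\EmptyPROP\to\DCBiAlgPROP\to\DagPROP$ to the node-free idag whose relation is the graph of~$\sigma$, and by $\EmptyPROP\to\LoopPROP\to\DagPROP$ to the same permutation idag carrying empty node-strings, so the two legs agree. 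Given symmetric monoidal functors $F\colon\DCBiAlgPROP\to\cat C$ and $G\colon\LoopPROP\to\cat C$ that restrict to a common functor on $\EmptyPROP$, I must then produce a unique mediating $H\colon\DagPROP\to\cat C$; note that $F$ and $G$ already agree on the object~$\ord 1$, so $H(\ord n)=A^{\tensor n}$ is forced, where $A=F(\ord 1)=G(\ord 1)$.

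The heart of the construction is a normal form obtained by \emph{topological sorting}. I claim that every abstract idag $D\colon\ord n\to\ord m$ factors, after topologically ordering its internal nodes $n_1,\dots,n_k$, as an alternating concatenation
\[
  D = W_k \comp (\lambda\tensor\id) \comp W_{k-1} \comp \cdots \comp (\lambda\tensor\id) \comp W_0
  \enspace,
\]
in which each $W_j$ is a node-free idag, hence a relation in the image of $\DCBiAlgPROP$, and each factor $(\lambda\tensor\id)$ applies a single node, hence lies in the image of $\LoopPROP$. Concretely, one processes the nodes in the chosen order while maintaining a list of \emph{live wires} (unconsumed inputs together with the outputs of already-processed nodes); acyclicity guarantees that when $n_j$ is reached all its in-neighbours are live, so the relation $W_{j-1}$ can merge them onto one wire, using the monoid/comonoid wiring of $\DCBiAlgPROP$, to feed $\lambda$, with a final $W_k$ routing the remaining live wires to $\ord m$. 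I would then define $H$ on this factorization by
\[
  H(D) = F(W_k) \comp G(\lambda\tensor\id) \comp \cdots \comp G(\lambda\tensor\id) \comp F(W_0)
  \enspace,
\]
reading the relational layers through~$F$ and the node layers through~$G$.

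The main obstacle is to show that $H(D)$ is independent of all choices: the topological order of the nodes and the ordering of the live wires at each stage. Both kinds of choice are mediated by permutations. Reordering live wires inserts a permutation that may be absorbed either into an adjacent relational layer, and read off by~$F$, or into an adjacent node layer, and read off by~$G$; these readings coincide precisely because $F$ and $G$ restrict to the same functor on $\EmptyPROP$. Two topological orders differ by a sequence of transpositions of adjacent incomparable nodes; such a transposition swaps two single-node layers acting on disjoint wires, and the interchange law of~$\cat C$, together with the agreement of $F$ and $G$ on symmetries, equates the two readings. Since any two linear extensions of the node ordering are connected by such adjacent transpositions, $H(D)$ is well defined. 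Functoriality and symmetric monoidality of~$H$ then follow by concatenating and juxtaposing factorizations and invoking the same coherence.

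Finally, uniqueness is immediate: any symmetric monoidal $H'$ restricting to~$F$ on $\DCBiAlgPROP$ and to~$G$ on $\LoopPROP$ is forced on every relation, on the node~$\lambda$, and on the permutations, and the factorization above exhibits every idag as a composite of tensors of exactly these generators; hence $H'=H$. This verifies the universal property and shows that the cospan~(\ref{Cospan}) is a pushout. I expect the well-definedness of~$H$ under change of topological sort to be the delicate point, as it is there that acyclicity and the degenerate commutative bialgebra equations do the real work.
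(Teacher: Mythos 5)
Your proposal is correct and follows essentially the same route as the paper: the alternating factorization of an idag into node-free relational layers interleaved with single applications of $\lambda$ along a topological sorting is exactly the paper's canonical decomposition, and your treatment of well-definedness via adjacent transpositions of incomparable nodes, absorption of permutations into adjacent layers, and compositionality under concatenation and juxtaposition matches the paper's Lemma~\ref{lemma:transposition_props}, Proposition~\ref{TopSortInvariance}, and the two compositionality propositions. The paper likewise builds the mediating functor from a $\DCBiAlgLoopTheory$-algebra on $A=G1$ and reads the decomposition through the induced interpretations, so no substantive difference remains.
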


\myparagraph{Categorical interpretation}

We start by giving an interpretation of finite idags on degenerate
commutative bialgebras with a node in arbitrary symmetric monoidal
categories.  Specifically, for every $\DCBiAlgLoopTheory$-algebra 
\[ 
  (A,\eta_A:I\to A,\nabla_A:A\tensor A\to A,\epsilon_A:A\to
  I,\Delta_A:A\to A\tensor A, \lambda_A:A\to A)
\]
in a symmetric monoidal category~$\cat C$ we will define mappings
\[
  \semD-_A
  : \DagPROP(\ord n,\ord m) \to \cat C(A^{\tensor n},A^{\tensor m})
\]
extending the interpretations for $\DCBiAlgPROP$ and $\LoopPROP$,
respectively induced by the \mbox{$\DCBiAlgTheory$-algebra}
$(A,\eta_A,\nabla_A,\epsilon_A,\Delta_A)$ and the \mbox{$\LoopTheory$-algebra}
$(A,\lambda_A)$, as follows 
\[\xymatrix{
    \ar[dr]_-{\semR-_A} \DCBiAlgPROP(\ord n,\ord m) \ar[r] &
    \DagPROP(\ord n,\ord m) \ar[d] & \ar[l] \LoopPROP(\ord n,\ord m)
    \ar[dl]^-{\semS-_A} \\
    & \cat C(A^{\tensor n},A^{\tensor m}) & 
  }\]

For dag structure, in stark contrast with tree structure, there is no
direct definition of the interpretation function by structural induction,
and a more involved approach to defining it is necessary.  This proceeds
in two steps as follows.
\begin{enumerate}
  \item
    We give an interpretation $\psemD D\sigma_A$ parameterised by
    topological sortings~$\sigma$ of $D$.
  \item
    We show that the interpretation is independent of the topological
    sorting, in that $\psemD D\sigma _A = \psemD D{\sigma'}_A$ for all
    topological sortings~$\sigma$ and $\sigma'$ of $D$.  
\end{enumerate}

A \emph{topological sorting} of a finite 
\mbox{$(\ord n,\ord m)$-dag}~$D=(N,R)$ is a bijection 
\[
  \sigma: [N]\to N
  \enspace,\quad
  \mbox{for $[N]=\bigsetof{\, 0,\ldots,\mbox{$|N|\!-\!1$} \,}$}
\]
such that 
\[
  \forall\: 0\leq i,j <|N|.\, 
  \big(\inj_2(\sigma_i),\inj_2(\sigma_j)\big) 
    \in R 
  \implies 
  i < j 
\]
where $\inj_2$ denotes the second sum injection. 
Every such topological sorting induces a canonical decomposition in $\DagPROP$
as follows: 
\begin{equation}\label{DagDecomposition}
    D 
    = 
    D_{|N|}^\sigma \comp (\id_{n+|N|-1} \oplus \lambda) \comp
    D_{|N|-1}^\sigma \comp \cdots \comp (\id_{n} \oplus \lambda) \comp
    D_0^\sigma
\end{equation}
where, 
for $0\leq k<|N|$, each
$D_k^\sigma\in\DagPROP\big(\ord n\oplus\ord k,\ord n\oplus\ord
k\oplus\ord1\big)$ corresponds to the relation $R_k^\sigma =
(\inj_{n+k}\relu \ol R_k^\sigma)\in \DCBiAlgPROP\big(\ord n\oplus \ord
k,\ord n\oplus \ord k\oplus \ord 1\big)$ with $\inj_{n+k}$ the inclusion
relation and $\ol R_k^\sigma$ encoding the edges from the input nodes
$0\leq i<n$ and the internal nodes $\sigma_\ell$ for $0\leq \ell<k$ to the
internal node $\sigma_k$; 
while $D_{|N|}^\sigma\in\DagPROP\big(\ord n\oplus[N],\ord m\big)$ corresponds
to the relation 
$R_{|N|}^\sigma\in\DCBiAlgPROP\big(\ord n\oplus[N],\ord m\big)$ encoding the
edges from the input and the internal nodes to the output nodes.
Explicitly, 
for $0\leq k<|N|$ and $0\leq j<m$, 
\begin{itemize}
\item
$\forall\: 0\leq i < n .\,
  (i,n+k) \in \ol R_k^\sigma
  \iff  
  \big(\inj_1(i), \inj_2(\sigma_k)\big) \in R$
\smallskip

\item
$\forall\: 0\leq \ell < k.\,
  (n+\ell,n+k) \in \ol R_k^\sigma
  \iff  
  \big(\inj_2 (\sigma_\ell), \inj_2(\sigma_k)\big) \in R$
\smallskip

\item
$\forall\: 0\leq i < n.\,
  (i,j) \in R_{|N|}^\sigma 
  \iff 
  \big(\inj_1(i),\inj_1(j)\big) \in R$
\smallskip

\item
$\forall\: 0\leq \ell < |N|.\,
  (n+\ell, j) \in R_{|N|}^\sigma
  \iff 
  \big(\inj_2 (\sigma_\ell),\inj_1(j)\big) \in R$
\end{itemize}
where $\inj_1$ and $\inj_2$ respectively denote the first and second sum
injections. 
See Fig.~\ref{fig:dag_decomposition} for two sample decompositions.

\begin{figure}[t]
\centering
\hide{
\subfigure{
\begin{tikzpicture}[>=stealth]

\draw (1.5cm,1cm) node[draw,shape=circle,label=below:$a$] (a) {};
\draw (3cm,2cm) node[draw,shape=circle,label=below:$b$] (b) {};
\draw (1.5cm, 0cm) node[draw,shape=circle,label=below:$c$] (c) {};
\draw (4.5cm,1cm) node[draw,shape=circle,label=below:$d$] (d) {};

\draw (0cm,2cm) node[fill,scale=0.4,shape=circle,label=left:0] (in0) {};
\draw (0cm,1cm) node[fill,scale=0.4,shape=circle,label=left:1] (in1) {};
\draw (0cm,0cm) node[fill,scale=0.4,shape=circle,label=left:2] (in2) {};

\draw (6cm,1cm) node[fill,scale=0.4,shape=circle,label=right:0] (out1) {};

\draw [->] (in0) -- (a);
\draw [->] (in0) -- (b);
\draw [->] (in1) -- (a);
\draw [->] (in2) -- (c);
\draw [->] (a)   -- (b);
\draw [->] (b)   -- (d);
\draw [->] (b)   -- (out1);
\draw [->] (c)   -- (d);
\draw [->] (d)   -- (out1);
\end{tikzpicture}
}
}
\subfigure{
\begin{tikzpicture}[>=stealth,scale=0.8]

\draw (0cm,2.5cm) node[fill,scale=0.4,shape=circle,label=left:0] (in0) {};
\draw (0cm,2cm) node[fill,scale=0.4,shape=circle,label=left:1] (in1) {};
\draw (0cm,1.5cm) node[fill,scale=0.4,shape=circle,label=left:2] (in2) {};

\draw (0cm,2.5cm) -- (9cm,2.5cm);
\draw (9cm,2.4cm) -- ++(0cm,0.2cm);
\draw (0cm,2cm)   -- (9cm,2cm);
\draw (9cm,1.9cm) -- ++(0cm,0.2cm);
\draw (0cm,1.5cm) -- (9cm,1.5cm);
\draw (9cm,1.4cm) -- ++(0cm,0.2cm);

\draw (1cm,2.5cm) node[fill,scale=0.4,shape=circle] (ain0) {};
\draw (1cm,2cm)   node[fill,scale=0.4,shape=circle] (ain1) {};
\draw (1cm,1.5cm)   node[fill,scale=0.4,shape=circle] (ain2) {};
\draw (1.5cm,1cm) node[fill,scale=0.4,shape=circle] (aout) {};
\draw (2cm,1cm)   node[draw,shape=circle,label=below:$a$] (a) {};
\draw (ain0) -- (aout);
\draw (ain1) -- (aout);
\draw (ain2) -- (aout);
\draw (aout) -- (a);
\draw (a) -- (9cm,1cm);
\draw (9cm,0.9cm) -- ++(0cm,0.2cm);
\draw[color=gray,dashed] (0.75,2.75cm) -- ++(1cm,0cm) -- ++(0cm,-2cm) -- ++(-1cm,0) -- cycle;
\draw (1.25cm,2.75cm) node[label=above:$D_0^\sigma$] () {};

\draw (3cm,2.5cm)   node[fill,scale=0.4,shape=circle] (bin0) {};
\draw (3cm,1cm)     node[fill,scale=0.4,shape=circle] (bin1) {};
\draw (3.5cm,0.5cm) node[fill,scale=0.4,shape=circle] (bout) {};
\draw (4cm,0.5cm)   node[draw,shape=circle,label=below:$b$] (b) {};
\draw (bin0) -- (bout);
\draw (bin1) -- (bout);
\draw (bout) -- (b);
\draw (b) -- (9cm,0.5cm);
\draw[color=gray,dashed] (2.75cm,2.75) -- ++(1cm,0cm) -- ++(0cm,-2.5cm) -- ++(-1cm,0) -- cycle;
\draw (3.25cm,2.75cm) node[label=above:$D_1^\sigma$] () {};

\draw (5cm,1.5cm) node[fill,scale=0.4,shape=circle] (cin0) {};
\draw (5.5cm,0cm) node[fill,scale=0.4,shape=circle] (cout) {};
\draw (6cm,0cm) node[draw,shape=circle,label=below:$c$] (c) {};
\draw (cin0) -- (cout);
\draw (cout) -- (c);
\draw (c) -- (9cm,0cm);
\draw (9cm,-0.1cm) -- ++(0cm,0.2cm);
\draw[color=gray,dashed] (4.75,2.75) -- ++(1cm,0cm) -- ++(0cm,-3cm) -- ++(-1cm,0) -- cycle;
\draw (5.25cm,2.75cm) node[label=above:$D_2^\sigma$] () {};

\draw (7cm,1cm) node[fill,scale=0.4,shape=circle] (din0) {};
\draw (7cm,0cm) node[fill,scale=0.4,shape=circle] (din1) {};
\draw (7.5cm,-0.5cm) node[fill,scale=0.4,shape=circle] (dout) {};
\draw (8cm,-0.5cm) node[draw,shape=circle,label=below:$d$] (d) {};
\draw (din0) -- (dout);
\draw (din1) -- (dout);
\draw (dout) -- (d);
\draw[color=gray,dashed] (6.75,2.75) -- ++(1cm,0cm) -- ++(0cm,-3.5cm) -- ++(-1cm,0) -- cycle;
\draw (7.25cm,2.75cm) node[label=above:$D_3^\sigma$] () {};

\draw[color=gray,dashed] (8.75cm,2.75cm) -- ++(1cm,0cm) -- ++(0cm,-3.5cm) -- ++(-1cm,0) -- cycle;
\draw (9.25cm,2.75cm) node[label=above:$D_4^\sigma$] () {};

\draw (9cm,0.5cm) node[fill,scale=0.4,shape=circle] (outb) {};
\draw (9.5cm,-0.5cm) node[fill,scale=0.4,shape=circle] (out1) {};
\draw (10cm,-0.5cm) node[fill,scale=0.4,shape=circle,label=right:0] (out) {};

\draw (9.0cm,-0.5cm) node[fill,scale=0.4,shape=circle] (newdin) {};
\draw (outb) -- (out1);
\draw (d) -- (out1);
\draw (out1) -- (out);

\end{tikzpicture}
}
\subfigure{
\begin{tikzpicture}[>=stealth,scale=0.8]

\draw (0cm,2.5cm) node[fill,scale=0.4,shape=circle,label=left:$0$] (in0) {};
\draw (0cm,2cm) node[fill,scale=0.4,shape=circle,label=left:$1$] (in1) {};
\draw (0cm,1.5cm) node[fill,scale=0.4,shape=circle,label=left:$2$] (in2) {};

\draw (0cm,2.5cm) -- (9cm,2.5cm);
\draw (9cm,2.4cm) -- ++(0cm,0.2cm);
\draw (0cm,2cm)   -- (9cm,2cm);
\draw (9cm,1.9cm) -- ++(0cm,0.2cm);
\draw (0cm,1.5cm) -- (9cm,1.5cm);
\draw (9cm,1.4cm) -- ++(0cm,0.2cm);

\draw (1cm,2.5cm) node[fill,scale=0.4,shape=circle] (ain0) {};
\draw (1cm,2cm)   node[fill,scale=0.4,shape=circle] (ain1) {};
\draw (1cm,1.5cm)   node[fill,scale=0.4,shape=circle] (ain2) {};
\draw (1.5cm,1cm) node[fill,scale=0.4,shape=circle] (aout) {};
\draw (2cm,1cm)   node[draw,shape=circle,label=below:$a$] (a) {};
\draw (ain0) -- (aout);
\draw (ain1) -- (aout);
\draw (ain2) -- (aout);
\draw (aout) -- (a);
\draw (a) -- (9cm,1cm);
\draw (9cm,0.9cm) -- ++(0cm,0.2cm);
\draw[color=gray,dashed] (0.75,2.75cm) -- ++(1cm,0cm) -- ++(0cm,-2cm) -- ++(-1cm,0) -- cycle;
\draw (1.25cm,2.75cm) node[label=above:$D_0^{\sigma'}$] () {};

\draw (3cm,1.5cm) node[fill,scale=0.4,shape=circle] (cin0) {};
\draw (3.5cm,0.5cm) node[fill,scale=0.4,shape=circle] (cout) {};
\draw (4cm,0.5cm)   node[draw,shape=circle,label=below:$c$] (c) {};
\draw (cin0) -- (cout);
\draw (cout) -- (c);
\draw (c) -- (9cm,0.5cm);
\draw (9cm,0.4cm) -- ++(0cm,0.2cm);
\draw[color=gray,dashed] (2.75cm,2.75) -- ++(1cm,0cm) -- ++(0cm,-2.5cm) -- ++(-1cm,0) -- cycle;
\draw (3.25cm,2.75cm) node[label=above:$D_1^{\sigma'}$] () {};

\draw (5cm,2.5cm)   node[fill,scale=0.4,shape=circle] (bin0) {};
\draw (5cm,1cm)     node[fill,scale=0.4,shape=circle] (bin1) {};
\draw (5.5cm,0cm) node[fill,scale=0.4,shape=circle] (bout) {};
\draw (9cm,0cm) node[fill,scale=0.4,shape=circle] (newb) {};
\draw (6cm,0cm) node[draw,shape=circle,label=below:$b$] (b) {};
\draw (bin0) -- (bout);
\draw (bin1) -- (bout);
\draw (bout) -- (b);
\draw (b) -- (9cm,0cm);
\draw[color=gray,dashed] (4.75,2.75) -- ++(1cm,0cm) -- ++(0cm,-3cm) -- ++(-1cm,0) -- cycle;
\draw (5.25cm,2.75cm) node[label=above:$D_2^{\sigma'}$] () {};

\draw (7cm,0.5cm) node[fill,scale=0.4,shape=circle] (din0) {};
\draw (7cm,1cm) node[fill,scale=0.4,shape=circle] (din1) {};
\draw (7.5cm,-0.5cm) node[fill,scale=0.4,shape=circle] (dout) {};
\draw (8cm,-0.5cm) node[draw,shape=circle,label=below:$d$] (d) {};
\draw (din0) -- (dout);
\draw (din1) -- (dout);
\draw (dout) -- (d);
\draw[color=gray,dashed] (6.75,2.75) -- ++(1cm,0cm) -- ++(0cm,-3.5cm) -- ++(-1cm,0) -- cycle;
\draw (7.25cm,2.75cm) node[label=above:$D_3^{\sigma'}$] () {};

\draw[color=gray,dashed] (8.75cm,2.75cm) -- ++(1cm,0cm) -- ++(0cm,-3.5cm) -- ++(-1cm,0) -- cycle;
\draw (9.25cm,2.75cm) node[label=above:$D_4^{\sigma'}$] () {};

\draw (9.5cm,-0.5cm) node[fill,scale=0.4,shape=circle] (out1) {};
\draw (9.0cm,-0.5cm) node[fill,scale=0.4,shape=circle] (newdin) {};
\draw (10cm,-0.5cm) node[fill,scale=0.4,shape=circle,label=right:$0$] (out) {};
\draw (d) -- (out1);
\draw (out1) -- (out);
\draw (newb) -- (out1);

\end{tikzpicture}
}
\\
\subfigure{
\raisebox{-.3\height}{\begin{tikzpicture}[>=stealth,scale=0.8]
\draw (2.5cm,2.5cm) -- (4cm,2.5cm);
\draw (2.5cm,2cm) -- (4cm,2cm);
\draw (2.5cm,1.5cm) -- (4cm,1.5cm);
\draw (2.5cm,1cm) -- (4cm,1cm);

\draw (3cm,2.5cm)   node[fill,scale=0.4,shape=circle] (bin0) {};
\draw (3cm,1cm)     node[fill,scale=0.4,shape=circle] (bin1) {};
\draw (3.5cm,0.5cm) node[fill,scale=0.4,shape=circle] (bout) {};
\draw (bin0) -- (bout);
\draw (bin1) -- (bout);
\draw (bout) -- ++(0.5cm,0);
\draw[color=gray,dashed] (2.75cm,2.75) -- ++(1cm,0cm) -- ++(0cm,-2.5cm) -- ++(-1cm,0) -- cycle;
\draw (3.25cm,2.75cm) node[label=above:$R_1^\sigma$] () {};

\end{tikzpicture}}
\quad = \enspace
\raisebox{-.3\height}{\begin{tikzpicture}[>=stealth,scale=0.8]
\draw (2.5cm,2.5cm) -- (4cm,2.5cm);
\draw (2.5cm,2cm) -- (4cm,2cm);
\draw (2.5cm,1.5cm) -- (4cm,1.5cm);
\draw (2.5cm,1cm) -- (4cm,1cm);
\draw (3.5cm,0.5cm) -- (4cm,0.5cm);
\draw (3.5cm,0.625cm)-- (3.5cm,0.375cm);

\draw[color=gray,dashed] (2.75cm,2.75) -- ++(1cm,0cm) -- ++(0cm,-2.5cm) -- ++(-1cm,0) -- cycle;
\draw (3.25cm,2.75cm) node[label=above:$\inj_4$] () {};

\end{tikzpicture}}
\enspace $\relu$ \
\raisebox{-.3\height}{\begin{tikzpicture}[>=stealth,scale=0.8]
\draw (3cm,2.5cm)   node[fill,scale=0.4,shape=circle] (bin0) {};
\draw (3cm,1cm)     node[fill,scale=0.4,shape=circle] (bin1) {};
\draw (3.5cm,0.5cm) node[fill,scale=0.4,shape=circle] (bout) {};

\draw (2.5cm,2.5cm) -- (bin0);
\draw (2.5cm,2cm) -- (3cm,2cm);
\draw (3cm,2.125cm) -- (3cm,1.875cm);
\draw (2.5cm,1.5cm) -- (3cm,1.5cm);
\draw (3cm,1.625cm) -- (3cm,1.375cm);
\draw (2.5cm,1cm) -- (bin1);

\draw (bin0) -- (bout);
\draw (bin1) -- (bout);
\draw (bout) -- ++(0.5cm,0);

\draw (3.5cm,2.5cm) -- (4cm,2.5cm);
\draw (3.5cm,2.625cm) -- (3.5cm,2.375cm);
\draw (3.5cm,2cm) -- (4cm,2cm);
\draw (3.5cm,2.125cm) -- (3.5cm,1.875cm);
\draw (3.5cm,1.5cm) -- (4cm,1.5cm);
\draw (3.5cm,1.625cm) -- (3.5cm,1.375cm);
\draw (3.5cm,1cm) -- (4cm,1cm);
\draw (3.5cm,1.125cm) -- (3.5cm,0.875cm);

\draw[color=gray,dashed] (2.75cm,2.75) -- ++(1cm,0cm) -- ++(0cm,-2.5cm) -- ++(-1cm,0) -- cycle;
\draw (3.25cm,2.75cm) node[label=above:$\ol R_1^\sigma$] () {};

\end{tikzpicture}}
}
\caption{Decompositions of the (\ord3,\ord1)-dag of Fig.~\ref{Adag} for
  the topological sortings~$\sigma=(a,b,c,d)$ and $\sigma'=(a,c,b,d)$; and
  the decomposition of an auxiliary relation.}
\label{fig:dag_decomposition}
\end{figure}
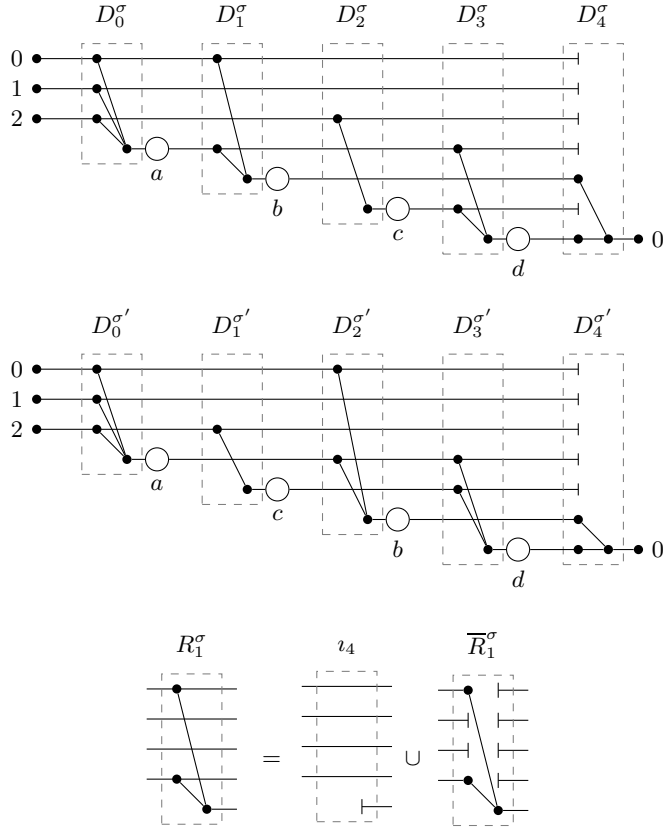

For a finite $(\ord n,\ord m)$-dag~$D$, we are led to define $\psemD
D\sigma_A: A^{\tensor n}\to A^{\tensor m}$ as the composite
\[ 
  \semR{R_{|N|}^\sigma}_A \comp (\id_{n+|N|-1} \tensor \lambda_A) \comp
  \semR{R_{|N|-1}^\sigma}_A \comp \cdots \comp (\id_{n} \tensor
  \lambda_A) \comp \semR{ R_0^\sigma }_A
  \enspace.
\]

The above definitions have been specifically chosen so that the properties
to follow are readily established.

\bigskip

A first remark is that the interpretation is invariant under isomorphism.
\begin{proposition}
Let $D=(N,R)$ and $D'=(N',R')$ be two finite $(\ord n,\ord m)$-dags
isomorphic by means of a bijection $\beta:N\iso N'$.  If $\sigma$ is a
topological sorting of $D$, then $\sigma'=\beta\comp\sigma$ is a
topological sorting of $D'$ and $\psemD D\sigma_A =\psemD{D'}{\sigma'}_A$.
\end{proposition}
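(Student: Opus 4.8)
The plan is to show that the isomorphism $\beta$, once used to reindex the sorting as $\sigma'=\beta\comp\sigma$, makes the two parameterised interpretations not merely equal but \emph{identical} as composites in $\cat C$, because the underlying relations $R_k^\sigma$ and $(R')_k^{\sigma'}$ coincide. This reflects the paper's earlier remark that the definition of $\psemD D\sigma_A$ was chosen precisely so that such properties are readily established.

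First I would record the consequences of the isomorphism condition $(\id+\beta)\comp R=R'\comp(\id+\beta)$. Since $\beta$ is a bijection we have $|N|=|N'|$, so $[N]=[N']$ and $\sigma'=\beta\comp\sigma:[N']\to N'$ is again a bijection. Because $\id+\beta$ is an invertible relation, the isomorphism equation unfolds into the statement that for all $x\in\ord n+N$ and $y\in\ord m+N$,
\[
  (x,y)\in R
  \iff
  \big((\id+\beta)(x),(\id+\beta)(y)\big)\in R'
  \enspace,
\]
where $\id+\beta$ acts as the identity on the $\ord n$- (resp.\ $\ord m$-) summand and as $\beta$ on the $N$-summand. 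Taking $x=\inj_2(\sigma_i)$ and $y=\inj_2(\sigma_j)$ yields the equivalence $(\inj_2(\sigma_i),\inj_2(\sigma_j))\in R\iff(\inj_2(\sigma'_i),\inj_2(\sigma'_j))\in R'$, from which the topological-sorting condition for $\sigma'$ on $D'$ follows immediately from that for $\sigma$ on $D$.

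Next I would verify that the reindexed relations agree: $R_k^\sigma=(R')_k^{\sigma'}$ for every $0\leq k<|N|$, together with $R_{|N|}^\sigma=(R')_{|N'|}^{\sigma'}$. This is a case analysis matching the four explicit clauses defining the $R_k$'s against the four instances of the displayed edge-transport equivalence, obtained by substituting $\inj_1(i)$ or $\inj_2(\sigma_\ell)$ for $x$ and $\inj_2(\sigma_k)$ or $\inj_1(j)$ for $y$. In each clause the $\ord n$- and $\ord m$-indices are left untouched by $\id+\beta$, while each occurrence of $\sigma_\ell$ or $\sigma_k$ is sent to $\sigma'_\ell=\beta(\sigma_\ell)$ or $\sigma'_k=\beta(\sigma_k)$; hence each defining bi-implication for $R_k^\sigma$ becomes exactly the corresponding one for $(R')_k^{\sigma'}$.

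Finally, since $\psemD D\sigma_A$ is by definition the composite of the maps $\semR{R_k^\sigma}_A$ interleaved with the fixed maps $\id\tensor\lambda_A$, and likewise $\psemD{D'}{\sigma'}_A$ is built from the $\semR{(R')_k^{\sigma'}}_A$ with the same interleaved maps, the equality of all the relations forces the two composites to be literally the same morphism of $\cat C$, giving $\psemD D\sigma_A=\psemD{D'}{\sigma'}_A$. The only point requiring care is the bookkeeping in the third step---keeping the two sum injections and the identity-versus-$\beta$ action straight across the four edge types---but no genuine obstacle arises, as the choice $\sigma'=\beta\comp\sigma$ is engineered precisely so that the relations match on the nose.
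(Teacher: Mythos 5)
Your proposal is correct and follows exactly the paper's argument: the paper's entire proof is the observation that $R_i^\sigma = {R'}_i^{\sigma'}$ for all $0\leq i\leq |N|$ by construction, which you establish (together with the routine check that $\sigma'$ is a topological sorting) via the edge-transport equivalence induced by $(\id+\beta)\comp R = R'\comp(\id+\beta)$. You have simply filled in the bookkeeping that the paper leaves implicit.
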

\begin{proof}
Because one has by construction that $R_i^\sigma={R'}^{\sigma'}_i$ for all
$0\leq i\leq|N|$.  
\end{proof}

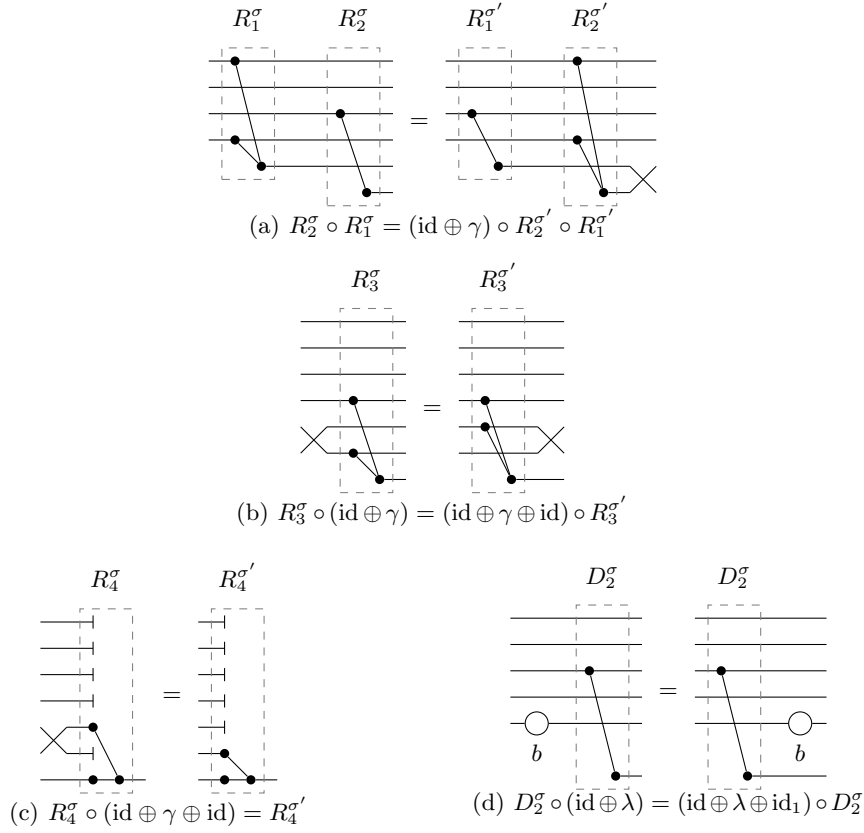
\begin{figure}[t]
\centering
\subfigure[$R_2^\sigma \comp R_1^\sigma = (\id \oplus \symm) \comp
R_2^{\sigma'} \comp R_1^{\sigma'}$]{
\begin{minipage}{6.5cm}\begin{center}
\begin{tikzpicture}[>=stealth,scale=0.7]

\draw (2.5cm,2.5cm) -- (6cm,2.5cm);
\draw (2.5cm,2cm) -- (6cm,2cm);
\draw (2.5cm,1.5cm) -- (6cm,1.5cm);
\draw (2.5cm,1cm) -- (6cm,1cm);

\draw (3cm,2.5cm)   node[fill,scale=0.4,shape=circle] (bin0) {};
\draw (3cm,1cm)     node[fill,scale=0.4,shape=circle] (bin1) {};
\draw (3.5cm,0.5cm) node[fill,scale=0.4,shape=circle] (bout) {};
\draw (bin0) -- (bout);
\draw (bin1) -- (bout);
\draw (bout) -- (6cm,0.5cm);
\draw[color=gray,dashed] (2.75cm,2.75) -- ++(1cm,0cm) -- ++(0cm,-2.5cm) -- ++(-1cm,0) -- cycle;
\draw (3.25cm,2.75cm) node[label=above:$R_1^\sigma$] () {};

\draw (5cm,1.5cm) node[fill,scale=0.4,shape=circle] (cin0) {};
\draw (5.5cm,0cm) node[fill,scale=0.4,shape=circle] (cout) {};
\draw (cin0) -- (cout);
\draw (cout) -- (6cm,0cm); 
\draw[color=gray,dashed] (4.75,2.75) -- ++(1cm,0cm) -- ++(0cm,-3cm) -- ++(-1cm,0) -- cycle;
\draw (5.25cm,2.75cm) node[label=above:$R_2^\sigma$] () {};
\end{tikzpicture}
\raisebox{30pt}{\ \ =\ }
\begin{tikzpicture}[>=stealth,scale=0.7]
\draw (2.5cm,2.5cm) -- (6.5cm,2.5cm);
\draw (2.5cm,2cm) -- (6.5cm,2cm);
\draw (2.5cm,1.5cm) -- (6.5cm,1.5cm);
\draw (2.5cm,1cm) -- (6.5cm,1cm);

\draw (3cm,1.5cm) node[fill,scale=0.4,shape=circle] (cin0) {};
\draw (3.5cm,0.5cm) node[fill,scale=0.4,shape=circle] (cout) {};
\draw (cin0) -- (cout);
\draw (cout) -- (6cm,0.5cm);
\draw[color=gray,dashed] (2.75cm,2.75) -- ++(1cm,0cm) -- ++(0cm,-2.5cm) -- ++(-1cm,0) -- cycle;
\draw (3.25cm,2.75cm) node[label=above:$R_1^{\sigma'}$] () {};

\draw (5cm,2.5cm)   node[fill,scale=0.4,shape=circle] (bin0) {};
\draw (5cm,1cm)     node[fill,scale=0.4,shape=circle] (bin1) {};
\draw (5.5cm,0cm) node[fill,scale=0.4,shape=circle] (bout) {};
\draw (bin0) -- (bout);
\draw (bin1) -- (bout);
\draw (bout) -- (6cm,0.0cm);
\draw[color=gray,dashed] (4.75,2.75) -- ++(1cm,0cm) -- ++(0cm,-3cm) -- ++(-1cm,0) -- cycle;
\draw (5.25cm,2.75cm) node[label=above:$R_2^{\sigma'}$] () {};

\draw (6cm,0.5cm) -- (6.5cm, 0cm);
\draw (6cm,0cm) -- (6.5cm, 0.5cm);
\end{tikzpicture}
\end{center}\end{minipage}
}
\qquad\qquad
\subfigure[$R_3^\sigma \comp (\id \oplus \symm)= (\id \oplus \symm\oplus \id) \comp R_3^{\sigma'}
  $]{
\begin{minipage}{5cm}\begin{center}
\begin{tikzpicture}[>=stealth,scale=0.7]
\draw (6cm,2.5cm) -- (8cm,2.5cm);
\draw (6cm,2cm) -- (8cm,2cm);
\draw (6cm,1.5cm) -- (8cm,1.5cm);
\draw (6cm,1cm) -- (8cm,1cm);
\draw (6cm,0.5cm) -- (6.5cm,0cm);
\draw (6cm,0cm) -- (6.5cm,0.5cm);
\draw (6.5cm,0.5cm) -- (8cm,0.5cm);
\draw (6.5cm,0cm) -- (8cm,0cm);

\draw (7cm,1cm) node[fill,scale=0.4,shape=circle] (din0) {};
\draw (7cm,0cm) node[fill,scale=0.4,shape=circle] (din1) {};
\draw (7.5cm,-0.5cm) node[fill,scale=0.4,shape=circle] (dout) {};
\draw (din0) -- (dout);
\draw (din1) -- (dout);
\draw (dout) -- (8cm,-0.5cm);
\draw[color=gray,dashed] (6.75,2.75) -- ++(1cm,0cm) -- ++(0cm,-3.5cm) -- ++(-1cm,0) -- cycle;
\draw (7.25cm,2.75cm) node[label=above:$R_3^\sigma$] () {};
\end{tikzpicture}
\raisebox{30pt}{\ \ =\ }
\begin{tikzpicture}[>=stealth,scale=0.7]
\draw (6.5cm,2.5cm) -- (8.5cm,2.5cm);
\draw (6.5cm,2cm) -- (8.5cm,2cm);
\draw (6.5cm,1.5cm) -- (8.5cm,1.5cm);
\draw (6.5cm,1cm) -- (8.5cm,1cm);
\draw (6.5cm,0.5cm) -- (8cm,0.5cm);
\draw (6.5cm,0cm) -- (8cm,0cm);
\draw (8cm,0.5cm) -- (8.5cm,0cm);
\draw (8cm,0cm) -- (8.5cm,0.5cm);

\draw (7cm,1cm) node[fill,scale=0.4,shape=circle] (din0) {};
\draw (7cm,0.5cm) node[fill,scale=0.4,shape=circle] (din1) {};
\draw (7.5cm,-0.5cm) node[fill,scale=0.4,shape=circle] (dout) {};
\draw (din0) -- (dout);
\draw (din1) -- (dout);
\draw (dout) -- (8.5cm,-0.5cm);
\draw[color=gray,dashed] (6.75,2.75) -- ++(1cm,0cm) -- ++(0cm,-3.5cm) -- ++(-1cm,0) -- cycle;
\draw (7.25cm,2.75cm) node[label=above:$R_3^{\sigma'}$] () {};
\end{tikzpicture}
\end{center}\end{minipage}
}
\\ 
\subfigure[$R_4^\sigma \comp (\id \oplus \symm \oplus \id)
  = R_4^{\sigma'}$]{
\begin{minipage}{4cm}\begin{center}
\begin{tikzpicture}[>=stealth,scale=0.7]
\draw (8cm, 2.5cm) -- (9cm,2.5cm);
\draw (9cm,2.375cm) -- (9cm, 2.625cm);
\draw (8cm, 2cm) -- (9cm,2cm);
\draw (9cm,1.875cm) -- (9cm, 2.125cm);
\draw (8cm, 1.5cm) -- (9cm,1.5cm);
\draw (9cm,1.375cm) -- (9cm, 1.625cm);
\draw (8cm, 1cm) -- (9cm,1cm);
\draw (9cm,0.875cm) -- (9cm, 1.125cm);

\draw (8cm, 0.5cm) -- (8.5cm,0cm);
\draw (8cm, 0cm) -- (8.5cm,0.5cm);

\draw (8.5cm, 0.5cm) -- (9cm,0.5cm);
\draw (8.5cm, 0cm) -- (9cm,0cm);
\draw (9cm,-0.125cm) -- (9cm, 0.125cm);

\draw (9cm,0.5cm) node[fill,scale=0.4,shape=circle] (outb) {};
\draw (9.5cm,-0.5cm) node[fill,scale=0.4,shape=circle] (out1) {};
\draw (9.0cm,-0.5cm) node[fill,scale=0.4,shape=circle] (outd) {};
\draw (outb) -- (out1);
\draw (8cm,-0.5cm) -- (out1);
\draw (out1) -- (10cm, -0.5cm);

\draw[color=gray,dashed] (8.75cm,2.75cm) -- ++(1cm,0cm) -- ++(0cm,-3.5cm) -- ++(-1cm,0) -- cycle;
\draw (9.25cm,2.75cm) node[label=above:$R_4^\sigma$] () {};
\end{tikzpicture}
\raisebox{35pt}{\ \ =\ }
\begin{tikzpicture}[>=stealth,scale=0.7]
\draw (8.5cm, 2.5cm) -- (9cm,2.5cm);
\draw (9cm, 2.375cm) -- (9cm, 2.625cm);
\draw (8.5cm, 2cm) -- (9cm,2cm);
\draw (9cm, 1.875cm) -- (9cm, 2.125cm);
\draw (8.5cm, 1.5cm) -- (9cm,1.5cm);
\draw (9cm, 1.375cm) -- (9cm, 1.625cm);
\draw (8.5cm, 1cm) -- (9cm,1cm);
\draw (9cm, 0.875cm) -- (9cm, 1.125cm);
\draw (8.5cm, 0.5cm) -- (9cm,0.5cm);
\draw (9cm, 0.625cm) -- (9cm, 0.375cm);
\draw (8.5cm, 0cm) -- (9cm,0cm);

\draw (9cm,0cm) node[fill,scale=0.4,shape=circle] (outb) {};
\draw (9.5cm,-0.5cm) node[fill,scale=0.4,shape=circle] (out1) {};
\draw (outb) -- (out1);
\draw (8.5cm,-0.5cm) -- (out1);
\draw (out1) -- (10cm, -0.5cm);
\draw (9.0cm,-0.5cm) node[fill,scale=0.4,shape=circle] (outd) {};

\draw[color=gray,dashed] (8.75cm,2.75cm) -- ++(1cm,0cm) -- ++(0cm,-3.5cm) -- ++(-1cm,0) -- cycle;
\draw (9.25cm,2.75cm) node[label=above:$R_4^{\sigma'}$] () {};
\end{tikzpicture}
\end{center}\end{minipage}
}
\qquad\qquad\qquad
\subfigure[$D_2^\sigma \comp (\id \oplus \lambda) 
  = (\id \oplus \lambda \oplus \id_1) \comp D_2^\sigma$]{ 
\begin{minipage}{5cm}\begin{center}
\begin{tikzpicture}[>=stealth,scale=0.7]
\draw (4cm,0.5cm)   node[draw,shape=circle,label=below:$b$] (b) {};
\draw (3.5cm,2.5cm) -- ++(2.5cm,0cm);
\draw (3.5cm,2cm) -- ++(2.5cm,0cm);
\draw (3.5cm,1.5cm) -- ++(2.5cm,0cm);
\draw (3.5cm,1cm) -- ++(2.5cm,0cm);
\draw (3.5cm,0.5cm) -- (b);
\draw (b) -- (6cm,0.5cm);

\draw (5cm,1.5cm) node[fill,scale=0.4,shape=circle] (cin0) {};
\draw (5.5cm,-.5cm) node[fill,scale=0.4,shape=circle] (cout) {};
\draw (cin0) -- (cout);
\draw (cout) -- (6cm,-.5cm);
\draw[color=gray,dashed] (4.75,2.75) -- ++(1cm,0cm) -- ++(0cm,-3.5cm) -- ++(-1cm,0) -- cycle;
\draw (5.25cm,2.75cm) node[label=above:$D_2^\sigma$] () {};
\end{tikzpicture}
\raisebox{35pt}{\ \ =\ }
\begin{tikzpicture}[>=stealth,scale=0.7]
\draw (6.5cm,0.5cm)   node[draw,shape=circle,label=below:$b$] (b) {};
\draw (4.5cm,2.5cm) -- ++(2.5cm,0cm);
\draw (4.5cm,2cm) -- ++(2.5cm,0cm);
\draw (4.5cm,1.5cm) -- ++(2.5cm,0cm);
\draw (4.5cm,1cm) -- ++(2.5cm,0cm);
\draw (4.5cm,0.5cm) -- (b);
\draw (b) -- (7cm,0.5cm);

\draw (5cm,1.5cm) node[fill,scale=0.4,shape=circle] (cin0) {};
\draw (5.5cm,-.5cm) node[fill,scale=0.4,shape=circle] (cout) {};
\draw (cin0) -- (cout);
\draw (cout) -- (7cm,-.5cm);
\draw[color=gray,dashed] (4.75,2.75) -- ++(1cm,0cm) -- ++(0cm,-3.5cm) -- ++(-1cm,0) -- cycle;
\draw (5.25cm,2.75cm) node[label=above:$D_2^\sigma$] () {};
\end{tikzpicture}
\end{center}\end{minipage}
}
\caption{Graphical demonstration of the identities in
  Lemma~\ref{lemma:transposition_props} for the (\ord3,\ord1)-dag of
  Fig.~\ref{Adag}.} 
\label{fig:transposition_props}
\end{figure}

More fundamental is the independence of the interpretation under
topological sorting.

\begin{lemma}
\label{lemma:transposition_props}
Let $\sigma$ and $\sigma'$ be two topological sortings of a finite
\mbox{$(\ord n,\ord m)$-dag} $D=(N,R)$ with $|N|\geq 2$.  If $\sigma$ and
$\sigma'$ differ only by the transposition of two adjacent indices, say
$\sigma'_i=\sigma_{i+1}$ and $\sigma'_{i+1}=\sigma_i$ for $0\leq i<|N|-1$,
then the following identities hold:
\begin{enumerate}
\item 
\label{lemma:transposition_props_ONE}
  $R_j^\sigma = R_j^{\sigma'}$ for all $0\leq j<i$, 
  \smallskip

\item 
\label{lemma:transposition_props_TWO}
  $R_{i+1}^\sigma \comp R_i^\sigma 
   = 
   (\id_{n+i} \oplus \symm) \comp R_{i+1}^{\sigma'} \comp
   R_{i}^{\sigma'}$,
  \smallskip

\item
\label{lemma:transposition_props_THREE}
  $R_j^\sigma \comp (\id_{n+i} \oplus \symm\oplus \id_{j-i-2})
   = (\id_{n+i} \oplus \symm \oplus \id_{j-i-1}) \comp R_j^{\sigma'}$ 
  for all $i+1<j<|N|$,\\[-2mm]

\item
\label{lemma:transposition_props_FOUR}
  $R_{|N|}^{\sigma} \comp (\id_{n+i} \oplus \symm \oplus \id_{|N|-i-2})
   = R_{|N|}^{\sigma'}$,
  \smallskip

\item
\label{lemma:transposition_props_FIVE}
 $\semR{R_{i+1}^\tau} \comp (\id_{A^{\tensor n+i}} \tensor \lambda_A) 
  = (\id_{A^{\tensor n+i}} \tensor \lambda_A \tensor \id_A) \comp
  \semR{R_{i+1}^\tau}$ for $\tau=\sigma,\sigma'$. 
\end{enumerate}
\end{lemma}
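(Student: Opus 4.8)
The plan is to reduce all five identities to a single combinatorial observation about the two transposed nodes, and then verify each one by unwinding the explicit pair-by-pair description of the relations $R_k^\sigma$ and $R_k^{\sigma'}$. First I would record the relabelling data: since $\sigma$ and $\sigma'$ agree away from positions $i$ and $i+1$ and satisfy $\sigma'_i=\sigma_{i+1}$, $\sigma'_{i+1}=\sigma_i$, writing $\pi$ for the transposition of $i$ and $i+1$ we have $\sigma_{\pi(\ell)}=\sigma'_\ell$ for all $\ell$. The crucial fact is that there is no edge between the nodes $\sigma_i$ and $\sigma_{i+1}$ in either direction: an edge $\big(\inj_2(\sigma_i),\inj_2(\sigma_{i+1})\big)\in R$ would violate the sorting condition for $\sigma'$ (since $\sigma_i=\sigma'_{i+1}$ sits after $\sigma_{i+1}=\sigma'_i$), while an edge $\big(\inj_2(\sigma_{i+1}),\inj_2(\sigma_i)\big)\in R$ would violate it for $\sigma$. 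This single observation is what makes the transposition legitimate, and it is the only place the hypothesis that \emph{both} $\sigma$ and $\sigma'$ are topological sortings is used.

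With this in hand, identities~(\ref{lemma:transposition_props_ONE}), (\ref{lemma:transposition_props_THREE}) and~(\ref{lemma:transposition_props_FOUR}) are pure relational calculations in $\DCBiAlgPROP$, carried out by comparing defining membership conditions. For~(\ref{lemma:transposition_props_ONE}), when $j<i$ both the target node $\sigma_j$ and all source nodes $\sigma_\ell$ ($\ell<j$) are untouched by the transposition, so the defining clauses for $\ol R_j^\sigma$ and $\ol R_j^{\sigma'}$ coincide verbatim. For~(\ref{lemma:transposition_props_THREE}) and~(\ref{lemma:transposition_props_FOUR}) the target is again shared ($\sigma_j=\sigma'_j$, respectively the output nodes), and the only discrepancy between $R_j^\sigma$ and $R_j^{\sigma'}$ is that the source wires at positions $n+i$ and $n+i+1$ carry swapped node data. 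Pre-composing with the symmetry block that swaps precisely these two wires converts the $\sigma'$-reading into the $\sigma$-reading, because $\sigma_{\pi(\ell)}=\sigma'_\ell$; the post-composed symmetry on the codomain, present in~(\ref{lemma:transposition_props_THREE}) but absent in~(\ref{lemma:transposition_props_FOUR}) where no new node is introduced, accounts for the correspondingly swapped pass-through wires on the output side.

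The heart of the argument is identity~(\ref{lemma:transposition_props_TWO}), and here the no-edge observation does real work. Composing $R_i^\sigma$ with $R_{i+1}^\sigma$ could a priori create an indirect contribution from an input wire $x$ through the freshly added wire $n+i$ (node $\sigma_i$) into wire $n+i+1$ (node $\sigma_{i+1}$); but such a contribution exists only if $\sigma_i\to\sigma_{i+1}$, which is excluded. Consequently each input $x\in\ord{n+i}$ of $R_{i+1}^\sigma\comp R_i^\sigma$ maps to $x$, to $n+i$ exactly when $x$ is a source of $\sigma_i$, and to $n+i+1$ exactly when $x$ is a source of $\sigma_{i+1}$. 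The same computation for $\sigma'$, using the absence of the edge $\sigma_{i+1}\to\sigma_i$, places sources of $\sigma_{i+1}$ at $n+i$ and sources of $\sigma_i$ at $n+i+1$, which is exactly the previous relation with its last two outputs interchanged; post-composing with $\id_{n+i}\oplus\symm$ restores the match.

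Finally, identity~(\ref{lemma:transposition_props_FIVE}) is the lone semantic statement, in $\cat C$, and it too is a corollary of the no-edge fact. For $\tau=\sigma$ the wire $n+i$ carries $\sigma_i$, and since $\sigma_i$ is not a source of the newly added node $\sigma_{i+1}$ this wire neither branches to the new output $n+i+1$ nor receives any merge; it therefore threads through $R_{i+1}^\sigma$ as a bare identity wire, so applying $\lambda_A$ to it before or after $\semR{R_{i+1}^\sigma}$ agrees, which is the claimed commutation. The case $\tau=\sigma'$ is symmetric, using the absence of $\sigma_{i+1}\to\sigma_i$. I expect the main obstacle to be neither conceptual nor in~(\ref{lemma:transposition_props_TWO}) and~(\ref{lemma:transposition_props_FIVE}) themselves, but in keeping the index bookkeeping for the sum injections and the inclusion relations $\inj_{n+k}$ consistent across all five cases, so that the positions $n+i$, $n+i+1$ and $n+j$ are tracked correctly through each composite and each symmetry.
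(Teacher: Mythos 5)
Your proposal is correct and follows essentially the same route as the paper, which disposes of identities (1)--(4) ``by construction'' (i.e.\ exactly the membership-condition comparisons you carry out, with your no-edge observation between $\sigma_i$ and $\sigma_{i+1}$ being the implicit key to (2)) and proves (5) by factoring $R_{i+1}^\tau$ as $(\id_k\oplus\symm)\comp(R'\oplus\id_1)$ so that the isolated last wire is literally a tensored identity strand --- the formal version of your ``threads through as a bare identity wire.'' The only thing to tighten is that last step: in an arbitrary symmetric monoidal category the commutation of $\lambda_A$ past $\semR{R_{i+1}^\tau}$ should be justified by exhibiting this factorisation and then invoking the interchange law and naturality of the symmetry, which your observation that the wire neither branches nor receives a merge already licenses.
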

\hide{
\begin{proof}[Proof notes]
(\ref{lemma:transposition_props_TWO}) Intuitively, references to $i$ and
$i+1$ are swapped in $R_j^\sigma$ and $R_j^{\sigma'}$.  
\begin{itemize} 
  \item 
    $R_{i+1}^\sigma \comp R_i^\sigma = \emptyset$.
    \smallskip
  
  \item 
    $R_i^\sigma
     = (\id_{n+i} \oplus \epsilon \oplus \id_1) \comp R_{i+1}^{\sigma'}
       \comp (\id_{n+i} \oplus \eta)$.
    \smallskip
  
  \item 
    $R_{i+1}^\sigma 
       = (\id_{n+i} \oplus \symm) \comp 
         \big(R_i^{\sigma'} \oplus (\eta\comp\epsilon)\big)$.
    \smallskip

   \item 
     for $\rho:n+\ell \rightarrow n+\ell$ in $\DCBiAlgPROP$,
     $(\rho \oplus \id_1) \comp R_\ell^\tau = R_\ell^\tau$.
\end{itemize}
\end{itemize}
\end{proof}
}
\begin{proof}
The
identities~(\ref{lemma:transposition_props_ONE}--\ref{lemma:transposition_props_FOUR})
follow by construction.  
Identity~(\ref{lemma:transposition_props_FIVE}) is a consequence of the
following general fact: for every $R\in\DCBiAlgPROP(\ord{k+1},\ord{k+2})$ such
that, for all ${j\in\ord{k+2}}$, $(k,j)\in R$ iff $j=k$ one has
$R=(\id_k\oplus\gamma)\comp(R'\oplus\id_1)$ for 
${R'\in\DCBiAlgPROP(\ord k,\ord{k+1})}$; so that, for all $f:A\to A$,
\[\begin{array}{rcl}
\semR{R}_A\comp(\id_{A^{\tensor k}}\tensor f)
& = & 
(\id_{A^{\tensor k}}\tensor\gamma) \comp 
(\semR{R'}_A\tensor\id_A) \comp
(\id_{A^{\tensor k}}\tensor f) 
\\[2mm]
& = & 
(\id_{A^{\tensor k}}\tensor\gamma) \comp 
(\id_{A^{\tensor k+1}}\tensor f) \comp
(\semR{R'}_A\tensor\id_A)
\\[2mm]
& = & 
(\id_{A^{\tensor k}}\tensor f\tensor\id_A) \comp
(\id_{A^{\tensor k}}\tensor\gamma) \comp 
(\semR{R'}_A\tensor\id_A)
\\[2mm]
& = & 
(\id_{A^{\tensor k}}\tensor f\tensor\id_A) \comp
\semR{R}_A
\end{array}\]
\end{proof}

\begin{proposition}\label{TopSortInvariance}
For any two topological sortings $\sigma,\sigma'$ of a finite 
\mbox{$(\ord n,\ord m)$-dag}~$D$, 
\[
  \psemD D\sigma_A = \psemD D{\sigma'}_A
  : A^{\tensor n}\to A^{\tensor m}
  \enspace.
\]
\end{proposition}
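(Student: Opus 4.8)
The plan is to reduce to the single adjacent transposition already treated in Lemma~\ref{lemma:transposition_props} and then argue by transitivity. First I would establish that $\sigma$ and $\sigma'$ can be joined by a finite chain of topological sortings $\sigma=\tau^{(0)},\tau^{(1)},\ldots,\tau^{(r)}=\sigma'$ in which consecutive sortings differ only by the transposition of two adjacent indices; granting this, Lemma~\ref{lemma:transposition_props} will give $\psemD D{\tau^{(s)}}_A=\psemD D{\tau^{(s+1)}}_A$ for each $s$, and composing these equalities yields $\psemD D\sigma_A=\psemD D{\sigma'}_A$.

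For the reduction, I would use that the topological sortings of $D$ are precisely the linear extensions of the strict partial order induced on $N$ by the transitive closure $(\proj\comp R\comp\inj)^+$, together with the standard fact that the linear extensions of a finite poset are connected under adjacent transpositions. A bubble-sort argument makes this concrete: letting $x$ be the node that $\sigma'$ lists first, $x$ is a source, so it can be moved to the front of $\sigma$ by successive adjacent transpositions, each legal and each again a topological sorting; one then recurses on the sub-order carried by $N\setminus\{x\}$.

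It remains to treat a single transposition, say $\sigma'_i=\sigma_{i+1}$ and $\sigma'_{i+1}=\sigma_i$. Here I would rewrite the defining composite of $\psemD D\sigma_A$ into that of $\psemD D{\sigma'}_A$ using that $\semR{-}_A$ is a symmetric monoidal functor, hence preserves $\comp$ and $\tensor$ and sends $\symm$ to the symmetry of $\cat C$. Write $Q_j^\tau=(\id_{n+j}\tensor\lambda_A)\comp\semR{R_j^\tau}_A\comp\cdots\comp(\id_n\tensor\lambda_A)\comp\semR{R_0^\tau}_A$ for the initial segment of the composite through layer $j$, \emph{capped by the node $\lambda_A$ applied to $\sigma_j$}. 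The claim is the invariant $Q_j^\sigma=(\id_{A^{\tensor n+i}}\tensor\symm\tensor\id_{A^{\tensor j-i-1}})\comp Q_j^{\sigma'}$ for $i+1\le j<|N|$, proved by induction on $j$. The layers below position $i$ coincide by identity~(\ref{lemma:transposition_props_ONE}). The base case $j=i+1$ combines identity~(\ref{lemma:transposition_props_FIVE}) to slide the $\lambda_A$ decorating $\sigma_i$ above $\semR{R_{i+1}^\sigma}_A$, functoriality to fuse $\semR{R_{i+1}^\sigma}_A\comp\semR{R_i^\sigma}_A$ into $\semR{R_{i+1}^\sigma\comp R_i^\sigma}_A$, identity~(\ref{lemma:transposition_props_TWO}), and naturality of $\symm$ in the form $\symm\comp(\lambda_A\tensor\lambda_A)=(\lambda_A\tensor\lambda_A)\comp\symm$. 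The inductive step transports the padded symmetry across layer $j+1$ by identity~(\ref{lemma:transposition_props_THREE}) and then commutes it past the interposed $(\id_{n+j+1}\tensor\lambda_A)$ freely, since that $\lambda_A$ acts on the freshly created wire, which is disjoint from the two wires the symmetry permutes. Finally, applying $\semR{R_{|N|}^\sigma}_A$ to the invariant at $j=|N|-1$ and absorbing the symmetry by identity~(\ref{lemma:transposition_props_FOUR}) turns $\semR{R_{|N|}^\sigma}_A$ into $\semR{R_{|N|}^{\sigma'}}_A$ and delivers $\psemD D\sigma_A=\psemD D{\sigma'}_A$.

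The step I expect to be the main obstacle is the base case, and in particular the decision to compare the two composites only \emph{after} both transposed nodes carry their $\lambda_A$: capping at $Q_j$ rather than at the bare $\semR{R_j^\tau}_A$ is essential, because before $\sigma_{i+1}$ is decorated the two composites apply $\lambda_A$ to different nodes and are genuinely not related by a symmetry, so naturality cannot yet be applied. Once this alignment is in place, identity~(\ref{lemma:transposition_props_FIVE}) and the naturality of $\symm$ do the real work, and what remains is the routine index bookkeeping of the padded symmetries $\id_{A^{\tensor n+i}}\tensor\symm\tensor\id_{A^{\tensor j-i-1}}$ as they propagate upward; the combinatorial reduction, though conceptually the other ingredient, is entirely standard.
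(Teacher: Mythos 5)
Your proposal is correct and follows essentially the same route as the paper's proof: reduce to a single adjacent transposition, merge the two swapped layers via identities~(\ref{lemma:transposition_props_FIVE}) and~(\ref{lemma:transposition_props_TWO}) together with naturality of the symmetry, propagate the padded symmetry upward through the remaining layers via identity~(\ref{lemma:transposition_props_THREE}), and absorb it at the top via identity~(\ref{lemma:transposition_props_FOUR}); your inductive invariant $Q_j$ is just an explicit packaging of the paper's three-step calculation. The only material you add is the justification of the reduction itself (connectivity of linear extensions under adjacent transpositions), which the paper asserts without proof.
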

\begin{proof}
It is enough to establish the equality for $\sigma$ and $\sigma'$ as in
the hypothesis of Lemma~\ref{lemma:transposition_props}.  Let us then
assume this situation.

By
Lemma~\ref{lemma:transposition_props}~(\ref{lemma:transposition_props_ONE}),
we have
\[\begin{array}{ll}
(\id\tensor\lambda_A) \comp \semR{R_{i-1}^\sigma}_A \comp \cdots \comp
(\id\tensor\lambda_A) \comp \semR{R_{0}^\sigma}_A
\\[2mm]
\quad = \
(\id\tensor\lambda_A) \comp \semR{R_{i-1}^{\sigma'}}_A \comp \cdots \comp
(\id\tensor\lambda_A) \comp \semR{R_{0}^{\sigma'}}_A
\end{array}\]
so that we need only show
\[\begin{array}{l}
\semR{R_{|N|}^\sigma}_A \comp (\id\tensor\lambda_A) \comp
\semR{R_{|N|-1}^\sigma}_A \comp \cdots \comp (\id\tensor\lambda_A) \comp
\semR{R_{i}^\sigma}_A
\\[2mm]
\quad = \
\semR{R_{|N|}^{\sigma'}}_A \comp (\id\tensor\lambda_A) \comp
\semR{R_{|N|-1}^{\sigma'}}_A \comp \cdots \comp (\id\tensor\lambda_A) \comp
\semR{R_{i}^{\sigma'}}_A
\end{array}\]
For this we calculate in three steps as follows:
\begin{enumerate}
\item
$\begin{array}[t]{ll}
  (\id\tensor\lambda_A) \comp \semR{R_{i+1}^\sigma}_A \comp
  (\id\tensor\lambda_A) \comp \semR{R_{i}^\sigma}_A
  \\[2mm]
  \quad =\
  (\id\tensor\lambda_A) \comp (\id\tensor\lambda_A\tensor\id_A) \comp
  \semR{R_{i+1}^\sigma}_A \comp \semR{R_{i}^\sigma}_A
  \\[1mm]
  \qquad\qquad \text{, by
  Lemma~\ref{lemma:transposition_props}~(\ref{lemma:transposition_props_FIVE})}
  \\[2mm]
  \quad =\
  (\id\tensor\lambda_A) \comp (\id\tensor\lambda_A\tensor\id_A) \comp
  (\id\tensor\symm) \comp \semR{R_{i+1}^{\sigma'}}_A \comp
  \semR{R_{i}^{\sigma'}}_A
  \\[1mm]
  \qquad\qquad\text{, by
  Lemma~\ref{lemma:transposition_props}~(\ref{lemma:transposition_props_TWO})}
  \\[2mm]
  \quad =\
  (\id\tensor\symm) \comp (\id\tensor\lambda_A) \comp
  (\id\tensor\lambda_A\tensor\id_A) \comp \semR{R_{i+1}^{\sigma'}}_A
  \comp \semR{R_{i}^{\sigma'}}_A
  \\[2mm]
  \quad =\
  (\id\tensor\symm) \comp (\id\tensor\lambda_A) \comp
  \semR{R_{i+1}^{\sigma'}}_A \comp (\id\tensor\lambda_A) \comp
  \semR{R_{i}^{\sigma'}}_A
  \\[1mm]
  \qquad\qquad\text{, by
  Lemma~\ref{lemma:transposition_props}~(\ref{lemma:transposition_props_FIVE})}
\end{array}$

\item
$\begin{array}[t]{l}
    (\id_{A^{\tensor n+|N|-1}}\tensor\lambda_A) \comp
    \semR{R_{|N|-1}^\sigma}_A \comp \cdots 
    \\[2mm]
    \quad
    \cdots \comp 
    (\id_{A^{\tensor n+i+2}}\tensor\lambda_A) \comp
    \semR{R_{i+2}^\sigma}_A \comp (\id_{A^{\tensor n+i}}\tensor\symm)
    \\[2mm]
    \quad = \
    (\id_{A^{\tensor n+|N|-1}}\tensor\lambda_A) \comp
    \semR{R_{|N|-1}^\sigma}_A \comp 
    \cdots 
    \\[2mm]
    \qquad\qquad
    \cdots \comp
    (\id_{A^{\tensor n+i+2}}\tensor\lambda_A) \comp
    (\id_{A^{\tensor n+i}}\tensor\symm\tensor\id_A) \comp
    \semR{R_{i+2}^{\sigma'}}_A
    \\[2mm]
    \qquad\qquad
  \text{, by
  Lemma~\ref{lemma:transposition_props}~(\ref{lemma:transposition_props_THREE})}
    \\[2mm]
    \quad = \
    (\id_{A^{\tensor n+|N|-1}}\tensor\lambda_A) \comp
    \semR{R_{|N|-1}^\sigma}_A \comp 
    \cdots 
    \\[2mm]
    \qquad\qquad
    \cdots \comp
    (\id_{A^{\tensor n+i}}\tensor\symm\tensor\id_A) 
    \comp
    (\id_{A^{\tensor n+i+2}}\tensor\lambda_A) 
    \comp
    \semR{R_{i+2}^{\sigma'}}_A
    \\
    \quad\ \vdots
    \\
    \quad = \
    (\id_{A^{\tensor n+i}}\tensor\symm\tensor\id_{A^{\tensor|N|-i-2}})
    \comp (\id_{A^{\tensor n+|N|-1}}\tensor\lambda_A) \comp
    \semR{R_{|N|-1}^{\sigma'}}_A \comp 
    \cdots 
    \\[2mm]
    \qquad\qquad
    \cdots \comp 
    (\id_{A^{\tensor n+i+2}}\tensor\lambda_A) \comp
    \semR{R_{i+2}^{\sigma'}}_A
  \end{array}$

\item
  $\begin{array}[t]{ll}
    \semR{R_{|N|}^\sigma}_A
    \comp
    (\id_{A^{\tensor n+i}}\tensor\symm\tensor\id_{A^{\tensor|N|-i-2}})
    =
    \semR{R_{|N|}^{\sigma'}}_A
    &
\text{, by
  Lemma~\ref{lemma:transposition_props}~(\ref{lemma:transposition_props_FOUR})}
\end{array}$
\end{enumerate}
\end{proof}

\myparagraph{Compositionality} 

We show that the interpretation of finite idags is compositional for the
operations of concatenation and juxtaposition.

\begin{proposition}
Let $D=(N,R)$ be a finite $(\ord n,\ord m)$-dag topologically sorted by
$\sigma$ and $D'=(N',R')$ a finite $(\ord m,\ord \ell)$-dag topologically
sorted by $\sigma'$.  Write $\sigma'\after\sigma$ for the topological
sorting of the concatenation $(\ord n,\ord\ell)$-dag $D'\concat
D=(N+N',R'')$ according to $\sigma$ and then $\sigma'$ (that is, with
$(\sigma'\after\sigma)_i = \sigma_i$ for $0\leq i< |N|$ and
$(\sigma'\after\sigma)_{|N|+j} = \sigma'_{j}$ for $0\leq j<|N'|$).  Then,
\[
  \psemD{D'}{\sigma'}_A \comp \psemD{D}{\sigma}_A
  =
  \psemD{D'\concat D}{\sigma'\after\sigma}_A
  \enspace.
\]
\end{proposition}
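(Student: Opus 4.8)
The plan is to expand both sides through the defining interleaved composites of relation-layer semantics $\semR{R_k}_A$ and node applications $(\id\tensor\lambda_A)$, and then to cancel a common prefix coming from the $N$-nodes. Concretely, I would first verify that under $\sigma'\after\sigma$ the first $|N|$ relation layers of $D'\concat D$ are literally those of $D$, that is $R_k^{\sigma'\after\sigma}=R_k^\sigma$ for $0\leq k<|N|$. This is immediate from the explicit formulas for $\ol R_k$ and the definition of the concatenation relation $R''$: edges into an internal node of $N$ can only arise from the $R+\id_{N'}$ factor of $R''$, hence from $\ord n$-inputs and $N$-nodes exactly as in $D$. Writing
\[
  \Phi
  =
  (\id_{n+|N|-1}\tensor\lambda_A)\comp\semR{R_{|N|-1}^\sigma}_A\comp\cdots\comp(\id_{n}\tensor\lambda_A)\comp\semR{R_0^\sigma}_A
  \colon A^{\tensor n}\to A^{\tensor(n+|N|)}
\]
and recalling that $\psemD{D}{\sigma}_A=\semR{R_{|N|}^\sigma}_A\comp\Phi$, the statement reduces, after cancelling the shared factor $\Phi$, to the identity
\[
  \semR{R_{|N|+|N'|}^{\sigma'\after\sigma}}_A\comp(\id\tensor\lambda_A)\comp\cdots\comp(\id\tensor\lambda_A)\comp\semR{R_{|N|}^{\sigma'\after\sigma}}_A
  =
  \psemD{D'}{\sigma'}_A\comp\semR{R_{|N|}^\sigma}_A
\]
of maps $A^{\tensor(n+|N|)}\to A^{\tensor\ell}$.

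For the reduced identity I would establish the corresponding relation-level description of the upper layers. Again from the explicit edge formulas and the definition of $R''$, for $0\leq j<|N'|$ the layer $R_{|N|+j}^{\sigma'\after\sigma}$ is obtained from $R_j^{\sigma'}$ by feeding its $\ord m$-interface through the output relation $S=R_{|N|}^\sigma\in\DCBiAlgPROP(\ord n\oplus\ord{|N|},\ord m)$ of $D$, while retaining the $\ord n\oplus\ord{|N|}$ wires through the comonoid copy of $\DCBiAlgPROP$; likewise the final layer $R_{|N|+|N'|}^{\sigma'\after\sigma}$ factors $R_{|N'|}^{\sigma'}$ through $S$. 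Applying the symmetric monoidal functor $\semR{-}_A$, which preserves composition, tensor, symmetry and the comonoid structure, these identities let me factor a copy of $\semR{S}_A=\semR{R_{|N|}^\sigma}_A$ out of each $N'$-layer and slide it rightwards, the accumulated copies merging by coassociativity of copying. Since every node application $(\id\tensor\lambda_A)$ acts on the freshly created $N'$-wire, disjoint from the wires threaded by $S$, the general commutation fact established in the proof of Lemma~\ref{lemma:transposition_props}~(\ref{lemma:transposition_props_FIVE}) lets each $\lambda_A$ pass the sliding copy of $\semR{S}_A$ unchanged. Collecting the slid layers reassembles $\psemD{D'}{\sigma'}_A$ to the left of $\semR{R_{|N|}^\sigma}_A$, as required.

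I expect this relational bookkeeping to be the main obstacle: it is precisely here that the tension between collapsing the $N$-nodes into the $\ord m$-interface all at once, as in $\psemD{D}{\sigma}_A$, and carrying them untouched until the output layer, as in the concatenation, must be reconciled, and the exact factorisations through $S$ together with their compatibility with the interleaved $\lambda_A$'s need care. A cleaner route I would fall back on is induction on $|N'|$. The base case $|N'|=0$ is immediate, since then the output layer of the concatenation is the relational composite $R_0^{\sigma'}\comp R_{|N|}^\sigma$ and the claim follows from functoriality of $\semR{-}_A$. For the inductive step I would peel off the first $N'$-layer, using the commutation of Lemma~\ref{lemma:transposition_props}~(\ref{lemma:transposition_props_FIVE}) to expose a concatenation with one fewer $N'$-node to which the induction hypothesis applies.
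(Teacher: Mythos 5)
Your plan is essentially the paper's proof: the paper records exactly the three relational identities you describe --- the first $|N|$ layers of $D'\concat D$ coincide with those of $D$, the upper layers satisfy the intertwining ${R'}_j^{\sigma'}\comp\big(R_{|N|}^\sigma\oplus\id_j\big)=\big(R_{|N|}^\sigma\oplus\id_{j+1}\big)\comp{R''}_{|N|+j}^{(\sigma'\after\sigma)}$, and the final layer absorbs $R_{|N|}^\sigma\oplus\id_{|N'|}$ --- and then lets functoriality of $\semR{-}_A$ together with the interchange law (to pass $\semR{R_{|N|}^\sigma}_A\tensor\id$ across each $\id\tensor\lambda_A$, which acts on a disjoint wire) do the rest. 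One caution: carry out the ``sliding'' as this single intertwining of relations inside $\DCBiAlgPROP$ rather than by factoring a copy of $\semR{R_{|N|}^\sigma}_A$ out of each layer and merging the copies through $\Delta_A$ in $\cat C$, since $\Delta_A$ is not a natural transformation in a general $\DCBiAlgLoopTheory$-algebra and that merging step would not be justified.
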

\begin{proof}
The result follows from the definition of the interpretation function and the
following identities:
\begin{enumerate}
\item
  ${R}_i^\sigma = {R''}_i^{(\sigma'\after\sigma)}$ for all $0\leq i<|N|$,
  \smallskip

\hide{
\item
\[
\ol R^{\sigma'\after\sigma}_{|N|+j} 
= ???
(\ol R^{\sigma}_{|N|} \oplus \id) \comp (\ol
R^{\sigma'\after\sigma}_{|N|+j} \oplus \id) 
\] 
}

\item
  ${R'}_j^{\sigma'} \comp \big(R_{|N|}^\sigma\oplus \id_j\big)
   =
   \big(R_{|N|}^\sigma\oplus\id_{j+1}\big)
   \comp {R''}_{|N|+j}^{(\sigma'\after\sigma)}$
  for all $0\leq j<|N'|$,
  \smallskip

\item
  ${R'}_{|N'|}^{\sigma'}\comp \big( R_{|N|}^\sigma \oplus \id_{|N'|}\big)
   =
   {R''}_{|N+N'|}^{(\sigma'\after\sigma)}$.
\end{enumerate}
\end{proof}

\begin{proposition}
%
Let $D=(N,R)$ be a finite \mbox{$(\ord n,\ord m)$-dag} topologically sorted by
$\sigma$ and let $D'=(N',R')$ be a finite \mbox{$(\ord{n'},\ord{m'})$-dag}
topologically sorted by $\sigma'$.  
The \mbox{$(\ord{n+n'},\ord{m+m'})$-dag}~$D\juxt D'=(N+N',R'')$ obtained by
juxtaposition is topologically sorted by
$\sigma'\after\sigma$ and 
\[
  \psemD{D}{\sigma}_A \tensor \psemD{D'}{\sigma'}_A
  =
  \psemD{D\juxt D'}{\sigma'\after\sigma}_A 
  \enspace.
\]
\end{proposition}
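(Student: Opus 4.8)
The plan is to verify the topological-sorting claim and then reduce the semantic equation to the already-established compositionality for concatenation. The sorting claim is immediate: by construction the edge relation $R''$ of $D\juxt D'$ has no edges between $N$ and $N'$, restricting to that of $D$ on $N$ and to that of $D'$ on $N'$. Hence an edge $(\tau_a,\tau_b)\in R''$, for $\tau=\sigma'\after\sigma$, forces $a,b$ to lie jointly in $\{0,\dots,|N|-1\}$ or jointly in $\{|N|,\dots,|N+N'|-1\}$, where $\tau$ restricts respectively to $\sigma$ and to $\sigma'$; as these sort $D$ and $D'$, we obtain $a<b$ in either case, so $\tau$ topologically sorts $D\juxt D'$.

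For the semantic equation, I would exploit that $\DagPROP$ is a symmetric monoidal category, in which the interchange (bifunctoriality) law provides the factorisation
\[
  D\juxt D'
  =
  \big(\id_{\ord m}\juxt D'\big)
  \concat
  \big(D\juxt\id_{\ord{n'}}\big)
\]
through the $(\ord{n+n'},\ord{m+n'})$-dag $D\juxt\id_{\ord{n'}}$, topologically sorted by $\sigma$ (its internal nodes being exactly $N$), and the $(\ord{m+n'},\ord{m+m'})$-dag $\id_{\ord m}\juxt D'$, topologically sorted by $\sigma'$. Since the combined sorting of this concatenate is precisely $\sigma'\after\sigma$, the preceding proposition on concatenation yields
\[
  \psemD{D\juxt D'}{\sigma'\after\sigma}_A
  =
  \psemD{\id_{\ord m}\juxt D'}{\sigma'}_A
  \comp
  \psemD{D\juxt\id_{\ord{n'}}}{\sigma}_A
  \enspace.
\]

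It then remains to establish the two inert-wire identities
\[
  \psemD{D\juxt\id_{\ord{n'}}}{\sigma}_A = \psemD{D}{\sigma}_A\tensor\id_{A^{\tensor n'}}
  \enspace,\qquad
  \psemD{\id_{\ord m}\juxt D'}{\sigma'}_A = \id_{A^{\tensor m}}\tensor\psemD{D'}{\sigma'}_A
  \enspace,
\]
after which the claim follows from the interchange law in $\cat C$, namely $(\id_{A^{\tensor m}}\tensor g)\comp(f\tensor\id_{A^{\tensor n'}}) = f\tensor g$ for $f=\psemD{D}{\sigma}_A$ and $g=\psemD{D'}{\sigma'}_A$. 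I expect these two identities to be the main obstacle. Their proof rests on the observation that in $D\juxt\id_{\ord{n'}}$ the $n'$ wires contributed by the identity dag are inert: they carry no incident edges, so in the canonical decomposition each relation is, up to a fixed symmetry that merely repositions these wires, the corresponding $R_k^\sigma$ of $D$ juxtaposed with an identity, and each $\lambda_A$ acts only on an internal-node wire. Carrying the inert wires through every step and absorbing the repositioning symmetries by naturality of $\symm$ collapses the composite to $\psemD{D}{\sigma}_A\tensor\id_{A^{\tensor n'}}$; the second identity is symmetric.

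Alternatively, one could argue directly as in the proof for concatenation, by exhibiting relational identities expressing each $R''_k$ of $D\juxt D'$ in terms of $R_k^\sigma$ and ${R'}_j^{\sigma'}$ modulo symmetries and then reading off the claim from the definition of the interpretation. The reduction above is preferable in that it reuses the concatenation proposition and confines all the symmetry bookkeeping to the two inert-wire identities.
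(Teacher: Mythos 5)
Your proof is correct, but it is organised differently from the paper's. The paper argues directly on the canonical decomposition of $D\juxt D'$: it records that ${R''}_i^{(\sigma'\after\sigma)}$ agrees with $R_i^\sigma$ for $i<|N|$, that ${R''}_{|N|+j}^{(\sigma'\after\sigma)}=\id_{n+|N|}\oplus{R'}_j^{\sigma'}$, and that the final relation is ${R}_{|N|}^{\sigma}\oplus{R'}_{|N'|}^{\sigma'}$, so that the interpretation of the juxtaposition is literally the $D$-stages (padded with identity wires) followed by the $D'$-stages (padded with identity wires), and the tensor then falls out of the interchange law in $\cat C$. You instead factor $D\juxt D'=(\id_{\ord m}\juxt D')\concat(D\juxt\id_{\ord{n'}})$ and invoke the preceding proposition on concatenation, which is a legitimate and rather tidy reuse of earlier work; your verification that $\sigma'\after\sigma$ is a topological sorting is also a point the paper leaves implicit. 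The trade-off is that your route does not actually avoid the relational bookkeeping: your two inert-wire identities are essentially the paper's identities in disguise, and they carry an extra burden the paper's direct computation sidesteps, namely that in $D\juxt\id_{\ord{n'}}$ the $n'$ inert wires sit at positions $n,\dots,n+n'-1$ while in $\psemD{D}{\sigma}_A\tensor\id_{A^{\tensor n'}}$ they sit at the far right, so each stage is only a symmetry-conjugate of $R_k^\sigma\oplus\id_{n'}$ and the conjugating symmetries must be telescoped away by naturality (much as in the proof of the sorting-invariance proposition). You flag this correctly and your sketch of the telescoping is sound, so the argument goes through; it is simply that the symmetry bookkeeping you confine to the two lemmas is work the paper's direct decomposition of $D\juxt D'$ does not need to do.
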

\begin{proof}
The result follows from the definition of the interpretation function and the
following identities:
\begin{enumerate}
  \item
    ${R''}_i^{(\sigma'\after\sigma)} = R_i^\sigma$
    for all $0\leq i<|N|$,
    \smallskip

  \item
    ${R''}_{|N|+j}^{(\sigma'\after\sigma)} 
     = 
     \id_{n+|N|} \oplus {R'}_j^{\sigma'}$ 
     for all $0\leq j<|N'|$,
    \smallskip

  \item
    ${R''}_{|N+N'|}^{(\sigma'\after\sigma)}
     = {R}_{|N|}^{\sigma} \oplus {R'}_{|N'|}^{\sigma'}$.
\end{enumerate}
\end{proof}

\begin{proof}[of Lemma~\ref{PushoutLemma}]
For a cone 
\[\xymatrix@C15pt@R15pt{
    & \ar[dl] \ar[dd]|-G \EmptyPROP \ar[dr] & 
    \\
    \DCBiAlgPROP \ar[dr]_-F & & \ar[dl]^-H \LoopPROP
    \\
    & \cat C &
}\]
of symmetric monoidal categories, consider the $\DCBiAlgLoopTheory$-algebra
\[\begin{array}{ll}
  \multicolumn{2}{c}{
  A = G1
  }
  \\
  \eta_A = (I\iso F0\xymatrix{\ar[r]^-{F\eta}&} A)
  & ,\quad
  \nabla_A = (A\tensor A\iso F(2)\xymatrix{\ar[r]^-{F\nabla}&} A)
  \\
  \epsilon_A = (A\xymatrix{\ar[r]^-{F\epsilon}&} F0\iso I)
  & ,\quad
  \Delta_A = (A \xymatrix{\ar[r]^-{F\Delta}&} F(2)\iso A\tensor A )
  \\
  \multicolumn{2}{c}{
  \lambda_A = (A \xymatrix{\ar[r]^-{H\lambda}&} A)
  }
\end{array}\]
and define the unique mediating functor~$\DagPROP\to\cat C$ to map
$D\in\DagPROP(\ord n,\ord m)$ to the composite
\[
  \semD{D}_A
  =
  \big(\, G(n)\iso A^{\tensor n}
  \xymatrix@C60pt{\ar[r]^-{\psemD{(N,R)}\sigma_A}&} A^{\tensor m} \iso
  G(m) \,\big)
\]
for a topological sorting $\sigma$ of a representation $(N,R)$ of the
abstract idag~$D$.  (The symmetric monoidal structure of this functor is
inherited from that of $G$.)
\end{proof}

\hide{
We have thus established that the {\PROP} of finite abstract idags is free on
the symmetric monoidal equational theory of degenerate commutative bialgebras
with a node.
}

\mysection{Conclusion}

We have given an algebraic presentation of dag structure in the categorical
language of {\PROPs}, establishing that the {\PROP} of finite abstract
interfaced dags is universally characterised as being free on the symmetric
monoidal equational theory of degenerate commutative bialgebras with a node.
A main contribution in this respect has been a simple proof that provides an
initial-algebra semantics for dag structure.

The technique introduced in the paper is robust and can be adapted to a
variety of similar results.  
Firstly, one may drop the degeneracy condition on bialgebras.  In this case,
the free {\PROP} on the sum of the symmetric monoidal equational theories
$\BiAlgTheory$ and $\LoopTheory$ consists of idags with edges weighted by
positive natural
numbers.  These can be formalised as structures
$\big(I,O,N,R\in\Nat^{(I+N)\times(O+N)}\big)$ such that $\big(I,O,N,\setof{
  (x,y) \suchthat R(x,y)\not= 0 }\big)$ is an idag.
Secondly, one may introduce an antipode operator.  In this case, the free
{\PROP} on the sum of the symmetric monoidal equational theories
$\HopfAlgTheory$ and $\LoopTheory$ consists of idags with edges weighted by
non-zero integers.  Analogously, these can be formalised as structures
$\big(I,O,N,{R\in\Int^{(I+N)\times(O+N)}}\big)$ such that
$\big(I,O,N,\setof{ (x,y) \suchthat R(x,y)\not= 0 }\big)$ is an idag.
Of course, these two weightings respectively come from the structure of
$\BiAlgPROP$ and $\HopfAlgPROP$, see
\S\S~\ref{CommutativeBialgebras} and~\ref{CommutativeHopfAlgebras}.
Finally, one may generalise from $\LoopTheory$ to $\LoopsTheory L$ for a set
of labels~$L$.  The resulting free {\PROPs} consist of the appropriate
versions of \mbox{$L$-labelled} idags.

In another direction, one may consider extending the symmetric monoidal
theory~$\DCBiAlgLoopTheory$ with equations involving the node.  As
suggested to us by Samuel Mimram, an interesting possibility is to
introduce the equation 
\[
  \lambda = \nabla \comp (\lambda\oplus\id_1) \comp \Delta
  \ : 1\to1
  \enspace.
\]
According to the canonical decomposition~(\ref{DagDecomposition}), the
effect of this equation on the free {\PROP}~$\DagPROP$ is to force on 
idags~$D$ the identification 
\begin{align*}
  D  
  & =
  D_{|N|}^\sigma \comp (\id_{n+|N|-1} \oplus \lambda) \comp
  D_{|N|-1}^\sigma \comp \cdots \comp (\id_{n} \oplus \lambda) \comp
  D_0^\sigma
  \\[2mm]
  & = 
  D_{|N|}^\sigma \comp 
  \big(\id_{n+|N|-1} \oplus (\nabla\comp(\lambda\oplus\id_1)\comp\Delta)\big) 
  \comp D_{|N|-1}^\sigma
  \comp \cdots
  \\[1mm]
  &
  \qquad\qquad
  \cdots \comp 
  \big(\id_{n} \oplus (\nabla\comp(\lambda\oplus\id_1)\comp\Delta)\big) \comp
  D_0^\sigma 
  \\[2mm]
  & = 
  D^+
\end{align*}
for $D^+$ the transitive closure of $D$.  The free
{\PROP} 
consists then 
of transitive 
idags. 
For another example, one may consider introducing the equations
\[
  \lambda\comp\eta = \eta
  \ : 0\to 1
  \enspace,\quad
  \epsilon\comp\lambda = \epsilon
  \ : 1\to 0
  \enspace.
\]
The resulting free {\PROP} is that of idags with no dangling internal
nodes.



\begin{thebibliography}{1}

\bibitem{Abramsky}
Abramsky, S.:
\newblock Temperley-{Lieb} algebra: from knot theory to logic and computation
  via quantum mechanics.
\newblock In: Mathematics of Quantum Computing and Technology. (2007)  515--558

\bibitem{FioreLeinster}
Fiore, M., Leinster, T.:
\newblock An abstract characterization of {Thompson}'s group~${F}$.
\newblock Semigroup Forum \textbf{80} (2010)  325--340

\bibitem{Jay}
Jay, C.B.:
\newblock Languages for monoidal categories.
\newblock Journal of Pure and Applied Algebra \textbf{59} (1989)  61--85

\bibitem{lack2004composing}
Lack, S.:
\newblock Composing {PROPs}.
\newblock Theory and Applications of Categories \textbf{13}(9) (2004)  147--163

\bibitem{mac1963categorical}
Mac~Lane, S.:
\newblock Categorical algebra.
\newblock Bulletin of the American Mathematical Society \textbf{71}(1) (1965)
  40--106

\bibitem{milner2005axioms}
Milner, R.:
\newblock Axioms for bigraphical structure.
\newblock Mathematical Structures in Computer Science \textbf{15}(6) (2005)
  1005--1032

\bibitem{BigraphsBook}
Milner, R.:
\newblock The Space and Motion of Communicating Agents.
\newblock Cambridge University Press (2009)

\bibitem{pirashvili2002prop}
Pirashvili, T.:
\newblock On the {PROP} corresponding to bialgebras.
\newblock Cahiers de topologie et g{\'e}om{\'e}trie diff{\'e}rentielle
  cat{\'e}goriques \textbf{43}(3) (2002)  221--239

\end{thebibliography}

\end{document}